\newtheorem{theorem}{Theorem}[section]
\newtheorem{proposition}[theorem]{Proposition}
\newtheorem{lemma}[theorem]{Lemma}
\newtheorem{conjecture}[theorem]{Conjecture}
\newtheorem{corollary}[theorem]{Corollary}
\theoremstyle{definition}
\theoremstyle{remark}
\newtheorem{remark}[theorem]{Remark}
\newtheorem{question}[theorem]{Question}
\numberwithin{equation}{section}
\renewcommand{\epsilon}{\varepsilon}
\renewcommand{\phi}{\varphi}
\renewcommand{\kappa}{\varkappa}
\title{ Ivrii's conjecture for some cases in outer and symplectic billiards}
\author{Anastasiia Sharipova}
\address{Anastasiia Sharipova, Department of Mathematics, Pennsylvania State University, University Park, PA
16802, USA}
\email{sharipova.math@gmail.com}
\thanks{This work is supported by NSF grant DMS-2005444 and by the DFG CRC-TRR 191 “Symplectic structures in geometry, algebra and dynamics” (281071066).}
\subjclass[2020]{37C83, 37C25}
\keywords{Outer billiard, symplectic billiard, periodic orbit}
\begin{document}

\begin{abstract}
We give a proof for $(2n + 1,n)$ and $(2n, n-1)$-periodic Ivrii's conjecture for planar outer billiards. We also give new simple geometric proofs for the 3 and 4-periodic cases for outer and symplectic billiards, and generalize for higher dimensions in case of symplectic billiards.
\end{abstract}

\maketitle

\section{Introduction}

It was shown by Ivrii \cite{ivrii80} that Weyl's conjecture \cite{weyl1912} about distributions of eigenvalues of the Laplacian in a domain with a smooth boundary holds with assumption that the set of periodic billiard orbits in the domain has measure zero (in the phase space). For convex domains with real analytic boundaries the measure of periodic orbits is zero and the assumption holds. However, for non-analytic boundaries the question about the measure of periodic billiard orbits turned out to be hard and remains a conjecture.

\begin{conjecture}{(Ivrii,1980)}
    The set of periodic billiard orbits in a domain with a smooth boundary (not necessarily convex) has measure zero.
\end{conjecture}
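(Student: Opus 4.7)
The plan is to fix a period $n \geq 2$, prove that the set $P_n \subset M$ of $n$-periodic billiard orbits in the phase space $M = \partial\Omega \times (0,\pi)$ has Lebesgue measure zero, and then conclude by countable additivity over $n$. For each fixed $n$, an $n$-periodic orbit is determined by its ordered contact points $(x_1,\dots,x_n)$ on $\partial\Omega$, and the reflection law at each $x_i$ is exactly the critical point condition for the perimeter-type generating function
\[
L(x_1,\dots,x_n) = \sum_{i=1}^{n}|x_{i+1}-x_i|,
\]
with indices mod $n$. Thus $P_n$ is in bijection with the set of critical configurations of $L$ on $(\partial\Omega)^n$, and the task reduces to bounding the measure of $\mathrm{Crit}(L)$.

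In the real-analytic category the argument is clean: $L$ is real-analytic, so $\mathrm{Crit}(L)$ is a real-analytic subvariety of $(\partial\Omega)^n$, and it is either of positive codimension (hence of measure zero) or all of $(\partial\Omega)^n$. The latter alternative can be excluded by producing a single configuration at which the second variation of $L$, i.e.\ the Jacobi/monodromy operator along the orbit, is nondegenerate transverse to the orbit direction; this is a routine computation using the standard focusing formulas for reflection at a smooth curve of nonzero curvature. Summing over $n$ then gives the conjecture in the analytic case, which is essentially the classical result recalled in the introduction.

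The main obstacle, and the reason the conjecture is still open in general, is the passage from analytic to $C^\infty$ boundary: a $C^\infty$ function can have a critical set of positive Lebesgue measure (fat Cantor phenomena are allowed), so neither Sard's theorem nor \L{}ojasiewicz-type inequalities are available at the required codimension. For small periods one can bypass this by exploiting explicit elementary identities between mirror positions, incidence angles, and radii of curvature along the orbit; this is what Rychlik did for $n=3$, what subsequent authors did for $n=4$, and the strategy the present paper adapts to outer and symplectic billiards. For general $n$ I would attempt to show that on any putative positive-measure family of $n$-periodic orbits the monodromy $DT^n$ must carry the eigenvalue $1$ in a codimension-one way, so that such a family forces a dynamical rigidity incompatible with fat Cantor-type sets. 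Turning this heuristic into a rigorous measure-theoretic bound using only $C^\infty$ regularity is precisely the step where I would expect to get stuck, and is the reason the conjecture has resisted a general proof for over forty years.
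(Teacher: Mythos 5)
The statement you were asked about is not a theorem of the paper at all: it is Ivrii's conjecture, stated as an open problem, and the paper neither proves it nor claims to --- it only establishes analogues for special periods in outer and symplectic billiards. Your proposal, by your own admission in the final paragraph, also does not prove it: the entire difficulty is concentrated exactly where you stop, namely showing for a general $C^\infty$ boundary and general period $n$ that the $n$-periodic set has measure zero. Everything before that (the generating-function formulation, the analytic case, the reduction to fixed $n$ by countable additivity) is standard and is already implicit in the paper's introduction; the ``dynamical rigidity'' heuristic about the monodromy carrying the eigenvalue $1$ is precisely the kind of statement that is known only for $n=3$ (Rychlik, Stojanov, Wojtkowski, Vorobets) and $n=4$ in the plane (Glutsyuk--Kudryashov), so presenting it as a plan for general $n$ is not a proof but a restatement of the open problem.

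One concrete technical slip worth flagging even in the parts you treat as routine: you reduce the measure estimate in the two-dimensional phase space to a measure estimate for $\mathrm{Crit}(L)$ in the $n$-dimensional configuration space $(\partial\Omega)^n$. That reduction goes the wrong way: a set of $n$-dimensional measure zero can have a two-dimensional image of positive measure under a smooth map from an $n$-manifold to a $2$-manifold (think of projecting a hyperplane), so ``$\mathrm{Crit}(L)$ has measure zero in $(\partial\Omega)^n$'' does not by itself bound the measure of the periodic set in the phase space. The classical analytic-case argument avoids this by working directly in the phase space: $T^n(x,u)=(x,u)$ is an analytic condition there, so its solution set is either the whole phase space (excluded by exhibiting a single non-$n$-periodic orbit) or a proper analytic subvariety, hence of measure zero. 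In the $C^\infty$ category neither route is available, which is exactly why the conjecture remains open and why the paper restricts itself to the outer and symplectic settings for specific rotation numbers and periods.
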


Recall that the phase space consists of pairs $(x, u)$ where $x$ is a point on the boundary and $u$ is a unit vector (velocity) directed inside the domain. A periodic orbit is a finite set of points in the phase space that are mapped to each other by a billiard map, and therefore proving the zero measure of periodic orbits is equivalent to proving the zero measure of periodic points (in the phase space).

The Ivrii conjecture is hard even for a fixed period. It is clear for period 2 since for each boundary point there exists at most one 2-periodic orbit going through it, and hence the set of 2-periodic orbits has measure zero. For period 3 it is already not obvious. First, Rychlik \cite{rychlik89} proved using symbolic calculations that the set of 3-periodic orbits for a planar domain has measure zero. Stojanov \cite{stojanov91} simplified his calculations and later Wojtkovski \cite{wojtkovski94} gave a new simple proof using the Jacobi fields. Then Vorobets \cite{vorobets94} proved a 3-periodic case for domains in higher dimensions. Finally, Glutsyuk and Kudryashov \cite{glutsyuk-kudryashov12} solved the Ivrii conjecture for period 4 for planar domains. For all other periods the Ivrii conjecture is completely open. Recent results of Callis \cite{callis22} provide an alleged step towards disproving the Ivrii conjecture.

The Ivrii conjecture is also relevant in other geometries, and was studied in surfaces of constant curvatures or for other billiard dynamics. See for example \cite{blumen-kim-nance-zhar} for a proof of the conjecture for 3-periodic orbits in billiards on the hyperbolic plane, and a disproof for billiards on the 2-sphere. The conjecture was also studied for complex billiards \cite{glutsyuk-2017} and projective billiards \cite{fierobe20}.

In this paper we are interested in the similar question as the Ivrii conjecture but for outer and symplectic billiards in convex domains. The weaker version of the Ivrii conjecture claims that the set of periodic points has an empty interior (nowhere dense). In section 2 we give the basic definitions of outer billiard and prove the following theorem

\begin{theorem}
\label{star_orbits_theorem}
    The sets of $(2n + 1,n)$ and $(2n, n-1)$-periodic outer billiard orbits for a strongly convex planar domain with a smooth boundary have empty interiors.
\end{theorem}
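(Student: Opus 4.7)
The plan is to argue by contradiction: suppose the set $U$ of $(2n+1,n)$-periodic (respectively $(2n,n-1)$-periodic) outer billiard orbits has nonempty interior in the two-dimensional phase space. I then have a smooth two-parameter family of such orbits. The first move is to encode each orbit by its sequence of tangency points $T_i \in \gamma$, where $T_i = (P_i + P_{i+1})/2$. Iterating the outer billiard rule $P_{i+1} = 2T_i - P_i$, for odd period $p = 2n+1$ one obtains the explicit formula
\[
P_i = \sum_{j=0}^{2n}(-1)^j T_{i+j},
\]
whereas for even period $p = 2n$ the closure $P_{i+p} = P_i$ forces the vanishing of the alternating sum $\sum_{j=0}^{2n-1}(-1)^j T_{i+j} = 0$. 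Together with the condition that the chord $P_i P_{i+1}$ be tangent to $\gamma$ at $T_i$, this yields for each $i$ the parallelism
\[
u_i := T_{i+1} - T_{i+2} + T_{i+3} - \cdots - T_{i+2n} \;\parallel\; \gamma'(T_i),
\]
and a brief telescoping check gives the useful identity $u_i + u_{i-1} = T_i - T_{i-1}$.

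Next I would linearize along the assumed family. Within $U$ each tangency point $T_i$ is a smooth function of the two parameters, and each admissible variation $\dot T_i$ is tangent to $\gamma$, so $\dot T_i = \mu_i \gamma'(T_i)$ for some scalar $\mu_i$. Differentiating the parallelism $u_i \parallel \gamma'(T_i)$ and using the Frenet formulas for $\gamma$, the constraints turn into a linear system in the scalars $\mu_i$ whose coefficients involve the tangent and normal directions of $\gamma$ together with the curvatures $\kappa(T_i)$. The existence of an open set of periodic orbits means this system must admit a two-dimensional solution space at every point of $U$.

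The crux of the argument is then to exploit the star-polygon combinatorics specific to the rotation numbers $n/(2n+1)$ and $(n-1)/(2n)$. Because these rotation numbers are closest to $1/2$, consecutive tangency points sit nearly antipodally on $\gamma$, and the congruences $2n \equiv -1 \pmod{2n+1}$ and $2(n-1) \equiv -2 \pmod{2n}$ force a very specific cyclic interleaving of tangency points along the curve. I would use this structure to reorganize the linearized system into two interacting cyclic chains, and then unwind these chains using the identity $u_i + u_{i-1} = T_i - T_{i-1}$ to reduce the monodromy condition to an algebraic relation on the curvatures $\kappa(T_i)$. Strong convexity prevents this relation from holding across a genuine open set of configurations, yielding the desired contradiction. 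A natural way to organize the combinatorics is by induction on $n$, with the $(3,1)$ and $(4,1)$ cases as the base, and the inductive step being a geometric \emph{pinching} of a near-antipodal pair that reduces $(2n+1,n)$-orbits to $(2n-1,n-1)$-orbits (respectively $(2n,n-1)$ to $(2n-2,n-2)$).

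The main obstacle is precisely this last reduction: one must verify that a small open set of $(2n+1,n)$-periodic orbits yields a genuinely nondegenerate open set of $(2n-1,n-1)$-periodic orbits for a related outer billiard configuration, and that the ``pinched'' limit is not itself a degeneracy that hides the contradiction. Strong convexity of $\gamma$ is essential here to keep the tangent lines non-parallel and the construction nondegenerate under perturbation. Once this reduction is in place, the known base cases close the induction.
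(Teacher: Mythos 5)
Your setup (the alternating-sum formula for $P_i$ in terms of the tangency points, the parallelism condition, and the identity $u_i+u_{i-1}=T_i-T_{i-1}$) is correct, but from that point on the proposal is a plan rather than a proof, and the plan has a genuine gap at exactly the place where the rotation number must enter. The step ``reorganize the linearized system into two interacting cyclic chains \dots\ reduce the monodromy condition to an algebraic relation on the curvatures \dots\ strong convexity prevents this relation from holding across a genuine open set'' is asserted, not derived: no such relation is written down, and the claim that strong convexity rules it out is essentially the statement to be proved. That it cannot be a soft general-position argument is shown by the paper's own $8$-periodic example (Section 2.3), where a strongly convex smooth body carries two crossing segments of $8$-periodic points; so any contradiction must use quantitatively that the interior-angle sum is $\pi(n-2m)\le 2\pi$, i.e.\ precisely the rotation numbers $n/(2n+1)$ and $(n-1)/(2n)$, and your sketch never isolates that mechanism. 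The inductive ``pinching'' reduction from $(2n+1,n)$ to $(2n-1,n-1)$ is likewise unsupported --- removing a near-antipodal pair of vertices does not produce a periodic orbit of any identified outer billiard, and you yourself flag this reduction as the unresolved main obstacle --- so the induction does not close.

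For comparison, the paper avoids the nonlinear tangency-point system altogether: an open set of $(n,m)$-periodic points forces $dF^n=\Id$ there, and by the Gutkin--Katok formula $dF^n$ is a product $R(\alpha_n)A_n\cdots R(\alpha_1)A_1$ of rotations by the interior angles $\alpha_i$ and shears $A_i$ with positive shear parameter $2\rho_i/r_i$ (this is where strong convexity enters, via $\rho_i>0$). A short rotation-counting lemma (tracking the ``quasi-direction'' of a half-line) shows this product cannot be the identity unless $\sum\alpha_i>2\pi$, while for $(2n+1,n)$ and $(2n,n-1)$ the sum is $\pi$ and $2\pi$ respectively. If you want to salvage your approach, the most direct fix is to recognize that your linearized system is exactly the condition $dF^n=\Id$ in different coordinates and to prove the analogue of that counting lemma; without it, the proposal does not establish the theorem.
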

In this paper a \emph{strongly convex} domain means a domain which has positive curvature (positive definite second fundamental form) at every boundary point, and \emph{strictly convex} means a domain such that each line segment connected two boundary points has all its points except endpoints in the interior of the domain. 

$(m, k)$-periodic means $m$-periodic orbit with winding number $k$. The statement of the Theorem \ref{star_orbits_theorem} was proven by Tumanov and Zharnitsky \cite{tuman-zhar06} for 3 and 4-periodic orbits (which are particular examples for $n = 1$ and $n = 2$) and by Tumanov \cite{tuman20} for (5, 2) and (6, 2) using approach based on exterior differential systems (EDS). 

The Ivrii conjecture for outer billiards is false if the boundary is not smooth, as there exist polygonal billiards having open sets of periodic orbits, see for example Remark 5.1 in \cite{genin-tabach07} or the proof of Proposition 5.1 in \cite{fierobe-2024}.

The idea of our proof is considering the differential of the outer billiard map after several iterations and proving that it cannot be identity for these periods. This idea was used by Tabachnikov and Genin in their paper \cite{genin-tabach07} to prove this statement for a 3-periodic case. We also give new simple geometric proofs for periods 3 and 4.

In section 3 we consider symplectic billiards and prove the Ivrii conjecture for periods 3 and 4 for any dimensions:

\begin{theorem}
    The set of 3 and 4-periodic symplectic billiard orbits in a strongly convex domain with a smooth boundary has an empty interior.
\end{theorem}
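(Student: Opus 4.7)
The plan is to mirror the strategy used for outer billiards in Theorem~\ref{star_orbits_theorem}: if the set of $k$-periodic symplectic billiard orbits has nonempty interior, then the symplectic billiard map $T$ satisfies $T^k = \id$ on an open subset of the phase space $M \times M \setminus \Delta$, so its differential satisfies $dT^k = \Id$ at every point of that open set. I would then derive an explicit formula for $dT$ and rule out $dT^k = \Id$ for $k \in \{3,4\}$ using the local geometry of the characteristic line field.

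Setup. At each $x \in M$ the characteristic line $v(x) = \ker(\omega|_{T_xM})$ is $1$-dimensional, and $T(x_1,x_2) = (x_2, x_3)$ where $x_3 = x_1 + t\,v(x_2) \in M$. Differentiating this explicit expression together with the implicit constraint $x_3 \in M$ yields a formula for $dT$ in terms of: tangential variations of $x_1$ and $x_2$; the first-order change of the characteristic direction along $M$ at $x_2$, which involves the second fundamental form and $\omega$; and the derivative of the scalar $t$. With $dT$ in hand, I would attack period 3 geometrically. The three points $x_1, x_2, x_3$ of a 3-periodic orbit span a 2-plane $P$, and all three characteristic lines $v(x_i)$ lie in $P$; when $P$ is transverse to each $T_{x_i}M$ (generic under strong convexity), $v(x_i) = T_{x_i}M \cap P$ is tangent at $x_i$ to the planar convex curve $\gamma := M \cap P$, so the orbit is a 3-periodic planar symplectic billiard orbit on $\gamma$. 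An open $(4n{-}2)$-parameter family of such orbits on $M$ would, via a dimension count against the affine Grassmannian of 2-planes and the 2-dimensional phase space of each planar slice, force an open-in-planar-phase-space family of 3-periodic orbits on some $\gamma$, which contradicts the planar case that I would establish first by a direct Jacobi-field or area-generating-function argument.

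For period 4 the slicing shortcut is less clean, since a 4-periodic orbit generically spans a 3-dimensional affine subspace and $\omega$ does not restrict meaningfully to it. Instead I would exploit the structural relations $v(x_1) \parallel v(x_3)$ (both parallel to the diagonal $x_4 - x_2$) and $v(x_2) \parallel v(x_4)$ (both parallel to $x_3 - x_1$) to simplify the composition $dT^4 = dT \circ dT \circ dT \circ dT$ along the orbit, tracking the contribution of the second fundamental form at each orbit point and showing that the resulting product cannot equal the identity on a neighborhood. A planar version of this argument (using the parallel-tangent-pair structure of an inscribed $4$-gon on a smooth strongly convex curve) should serve as the model for the higher-dimensional computation.

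The main obstacle is the period-4 computation in higher dimensions: the block form of $dT$ mixes tangential and characteristic-direction variations through the second fundamental form, and one needs to extract the dependence carefully so that $dT^4 = \Id$ visibly overdetermines the local geometry at the four orbit points. A secondary difficulty, for period 3, is making the slicing dimension count rigorous: verifying that the $(4n{-}2)$-dimensional family of orbits cannot map into the affine Grassmannian of 2-planes in such a way that every fiber stays within the empty-interior dimension bound available from the planar case.
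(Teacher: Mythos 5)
Your plan has two genuine gaps, one for each period. For period 3, the slicing argument does not close. It is true that a 3-periodic orbit lies in a 2-plane $P$ and restricts to a 3-periodic orbit of the planar symplectic billiard on $\gamma = X \cap P$, but the dimension count you invoke goes the wrong way: the affine Grassmannian of 2-planes in $\mathbb{R}^{2n}$ has dimension $6n-6$, while your hypothetical open family of orbits has dimension $4n-2$, and $6n-6 \ge 4n-2$ for all $n \ge 2$. So an open family of orbits can map to the space of planes with zero-dimensional fibers; nothing forces any single slice $\gamma$ to carry an open (or even positive-dimensional) family of planar 3-periodic orbits, and the planar statement --- which you also leave unproved, deferring it to an unspecified Jacobi-field or generating-function argument --- only gives empty interior in each slice anyway, which is too weak to rule out fibers of intermediate size. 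The paper avoids all of this: a 3-periodic symplectic trajectory $ABC$ is exactly the midpoint triangle of a circumscribed tangent triangle $A_1B_1C_1$ with characteristic tangent directions, i.e.\ of a 3-periodic \emph{outer} billiard trajectory. In the plane this reduces the statement to Corollary \ref{3-4-per} (via Proposition \ref{3-per-outer}), and in higher dimensions the resulting injective continuous map from a $(4n-2)$-dimensional open set of 3-periodic points into the $2n$-dimensional outer billiard phase space is impossible by invariance of domain. If you want to keep a slicing flavor, you would at minimum need a quantitative (measure or dimension) version of the planar result per slice plus a Fubini-type argument over the plane family, none of which is set up in your proposal.

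For period 4, you correctly identify the structural relations $v(x_1)\parallel x_4-x_2\parallel v(x_3)$ and $v(x_2)\parallel x_3-x_1\parallel v(x_4)$, but you then route them into an uncomputed $dT^4=\Id$ analysis that you yourself flag as the main obstacle; as it stands this is a plan, not a proof, and there is no evidence offered that the pointwise equation $dT^4=\Id$ is actually obstructed (recall that for outer billiards the analogous identity \emph{is} attainable pointwise for $n\ge 5$, so such claims need proof). The paper shows these parallelism relations already finish the argument with no differential computation at all: $v(x_1)\parallel v(x_3)$ means $N_{x_3}=-N_{x_1}$, and by strict convexity $x_3$ is the unique such point; then $x_2$ and $x_4$ are the only two boundary points whose characteristic lines are parallel to $x_1x_3$. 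Hence at most one 4-periodic trajectory passes through any given boundary point, in any dimension, which immediately gives empty interior. You had the right geometric ingredients for period 4 but aimed them at a much harder and unfinished computation instead of the direct uniqueness conclusion.
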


The statement was proven by Albers and Tabachnikov \cite{albers-tabach2017} for a planar case using the EDS method.

\section{Ivrii's conjecture for outer billiards}
Let $X$ be a convex body in the plane $\mathbb{R}^{2}$. An \emph{outer billiard} is a dynamical system where a point $x$ outside of $X$ is mapped to a point $y$ on the tangent line from $x$ to $X$ such that the point of tangency $z$ is a midpoint between $x$ and $y$. Since there are two tangent lines, we make always a choice such that vector $xy$ has the same direction as an orientation of $\partial X$ at a tangency point.  We denote the outer billiard map by $F$. Outer billiard map is an area-preserving twist map of the exterior of $X$ and in case of a smooth strongly convex body it is well-defined at any point. Outer billiard was introduced by Bernhard Neumann \cite{neumann-1958} as a model to study a stability problem. See \cite{tabach93} for a detailed introduction to outer billiards.


\begin{figure}[h]
    \centering
    \includegraphics[width=55mm]{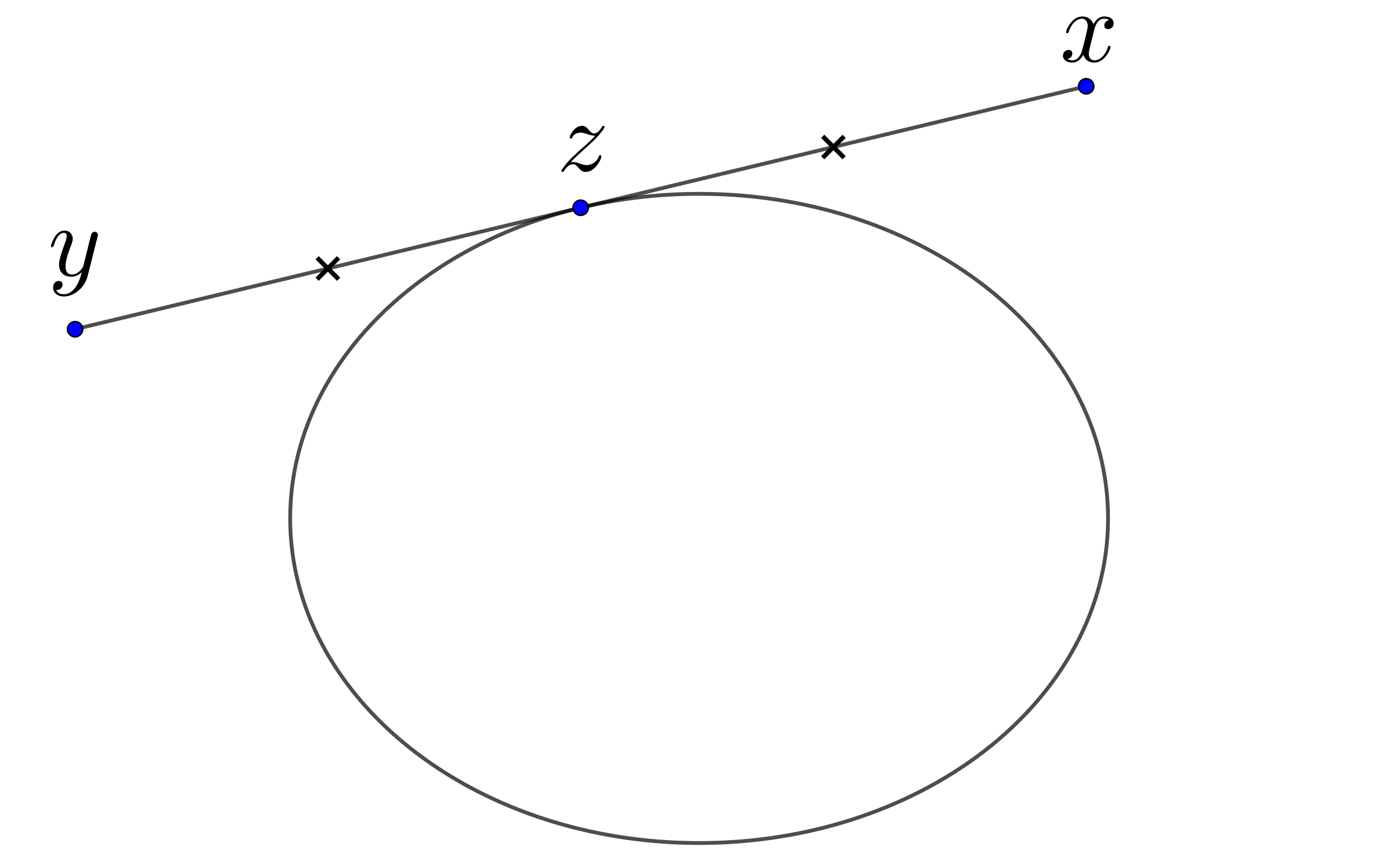}
    \caption{Outer billiard reflection law}
    \label{fig:enter-label}
\end{figure}

One can ask questions about periodic orbits for outer billiards. It is known that for any period $n$ and any rotation number there are at least two $n$-periodic orbits in case of smooth convex domains. In this work we are interested in the upper bound (in terms of measure) of them. The phase space in the case of outer billiard is the exterior of a convex body and orbit is a set of points which are mapped to each other consecutively by the outer billiard map. \emph{Outer billiard trajectory} is defined as a polygonal line consisting of segments between consecutive points of an outer billiard orbit, and we say that it is $n$-periodic if the corresponding orbit is $n$-periodic.

\subsection{New proofs for n = 3 and n = 4}

For these periods we prove an even stronger result that any tangent line contains at most two 3 and 4-periodic points. Equivalently, for any $z \in \partial X$ there is at most one 3-periodic trajectory and at most one 4-periodic trajectory passing through $z$. We always assume that our domain $X$ is strongly convex with a smooth boundary.

\begin{proposition}
\label{3-per-outer}
    For any $z \in \partial X$ there is at most one 3-periodic trajectory passing through $z$.
\end{proposition}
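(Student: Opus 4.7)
My plan is to rephrase the 3-periodic outer billiard condition geometrically. Since the tangency points $z_1, z_2, z_3$ are the midpoints of the sides of the circumscribing triangle $x_1 x_2 x_3$, the inscribed triangle $z_1 z_2 z_3$ is the medial triangle of $x_1 x_2 x_3$, and its sides are parallel to those of $x_1 x_2 x_3$: explicitly, $z_2 - z_3 = (x_2-x_1)/2$, so $z_2 z_3 \parallel x_1 x_2$, and the latter is precisely the tangent line to $\partial X$ at $z_1$. Hence a 3-periodic trajectory corresponds bijectively to an inscribed triangle $z_1 z_2 z_3 \subset \partial X$ such that the tangent to $\partial X$ at each vertex is parallel to the opposite side. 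Fixing $z_1 = z$, the condition at $z$ immediately forces the chord $z_2 z_3$ to be parallel to the tangent line $\ell$ at $z$.

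I would then parametrize the one-parameter family of chords of $\partial X$ parallel to $\ell$ by their height $h$ above $\ell$, with endpoints $z_2(h), z_3(h) \in \partial X$. The remaining two tangency conditions --- the tangent at $z_2(h)$ parallel to the line $z z_3(h)$, and the tangent at $z_3(h)$ parallel to $z z_2(h)$ --- are two scalar equations in the single parameter $h$, so the system is overdetermined. The content of the proposition is that it has at most one simultaneous solution, so for a generic $z$ there is no 3-periodic trajectory through $z$, and when one exists it is unique.

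The main obstacle is precisely this simultaneous uniqueness. The natural approach is to package the two remaining conditions into a single scalar defect $\Phi(h)$ whose vanishing is equivalent to the 3-periodic condition. A promising candidate is the signed area of the inscribed triangle $z z_2(h) z_3(h)$, whose derivative with respect to $h$ is naturally expressed through the two tangency defects at $z_2(h)$ and $z_3(h)$; another is a scalar obtained by pairing $F^2(x_2(h)) + x_2(h) - 2z$ with a well-chosen vector, where $F$ is the outer billiard map. In either case the plan is to differentiate in $h$ and show the derivative has a definite sign, so that $\Phi$ is strictly monotonic in $h$.

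The hardest step will be establishing this monotonicity for a general, non-symmetric strongly convex curve, since the two individual tangency defects need not be monotonic separately and only a judicious combination has a clean sign. The key input is the strong convexity hypothesis --- everywhere positive curvature of $\partial X$ --- which must contribute with a uniform sign to $d\Phi/dh$; I expect the clean geometric content of the proposition to be exactly the identification of the right $\Phi$ for which positivity of the curvature makes this monotonicity transparent.
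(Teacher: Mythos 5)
Your reformulation is correct and is consistent with the paper's setup: the tangency points of a 3-periodic trajectory form the medial triangle of the circumscribed triangle, so the chord $z_2z_3$ is parallel to the tangent line $\ell$ at $z$, and fixing $z$ reduces everything to the one-parameter family of chords parallel to $\ell$. But the proposal stops exactly where the proposition's content begins. The scalar defect $\Phi(h)$ is never defined --- you name two candidates (signed area of the inscribed triangle, or a pairing of $F^2(x_2(h))+x_2(h)-2z$ with an unspecified vector) --- and the claim that its $h$-derivative has a definite sign is stated as an expectation, not proved. The remark that ``positive curvature must contribute with a uniform sign to $d\Phi/dh$'' is precisely the assertion that needs an argument; it is not clear that either candidate is monotone, since, as you yourself note, the two individual tangency defects at $z_2(h)$ and $z_3(h)$ need not be monotone, and no mechanism is given by which they combine with a clean sign. (The aside that an overdetermined system has ``generically no solution'' is also unsupported, though it is not needed for the statement.) As it stands this is a plan, not a proof.

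For comparison, the paper's proof is purely synthetic and avoids any differentiation or defect function: assume two 3-periodic trajectories through the same tangency point $z_1=z$, with circumscribed triangles $x_1x_2x_3$ and $x_1'x_2'x_3'$; convexity forces the tangency points $z_2',z_3'$ of the larger triangle to lie inside the triangle $z_3z_2x_3$, so the segment $z_2'z_3'$, being parallel to $z_2z_3$, has length at most $|z_2z_3|$; but the midsegment identity gives $|z_2'z_3'|=|z_1x_1'|>|z_1x_1|=|z_2z_3|$, a contradiction. If you want to salvage your one-parameter setup, this suggests the right ``monotone quantity'': compare the length of the chord of $\partial X$ parallel to $\ell$ through $z_2,z_3$ (equivalently $|x_1z_1|$) across the two hypothetical trajectories, and use convexity (containment of one configuration in the other) rather than a curvature-weighted derivative computation. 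That comparison is where the convexity hypothesis actually enters, and it replaces the unproved monotonicity step in your outline.
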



\begin{figure}[bh]
    \centering
    \includegraphics[width=125mm]{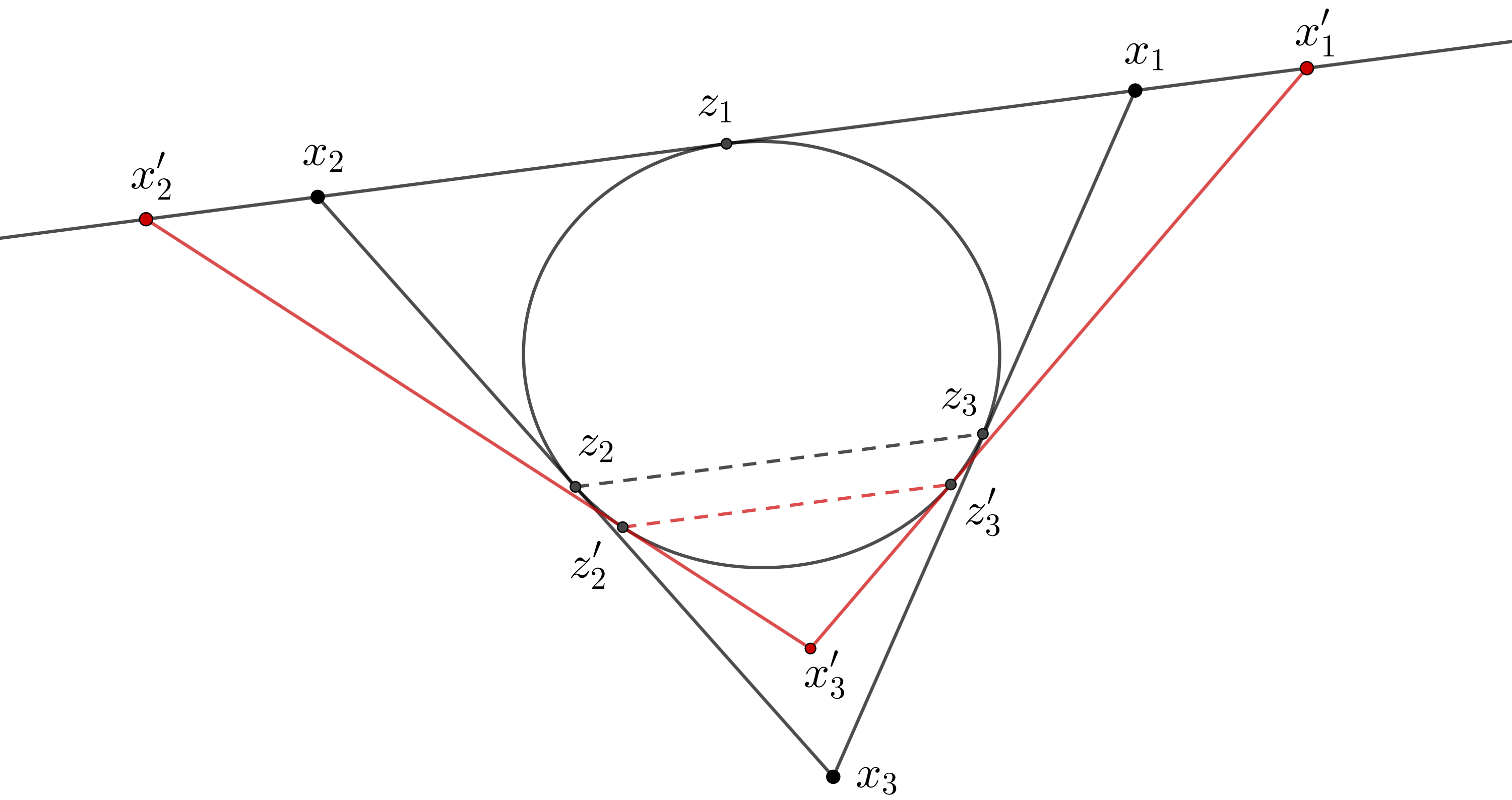}
    \caption{Two 3-periodic orbits}
    \label{3-periodic-outer-Ivrii}
\end{figure}

\begin{proof}
Let $l$ be a tangent line with tangency point $z_1 = z$ and assume we have two 3-periodic orbits $(x_1, x_2, x_3)$ and $(x_1', x_2', x_3')$ with tangency points $z_1, z_2, z_3$ and $z_1, z_2', z_3'$ as in the figure $\ref{3-periodic-outer-Ivrii}$. Since the domain is strongly convex, it lies inside both triangles $\triangle x_1x_2x_3$ and $\triangle x_1'x_2'x_3'$ by their construction. In particular the midpoints (which are tangency points) of one triangle lie inside the other.

Without loss of generality assume that $x_1$ lies between $x_1'$ and $z_1$, which implies that $x_2$ lies between $x_2'$ and $z_1$. The midpoints $z_2'$ and $z_3'$ lie inside $\triangle x_1x_2x_3$, hence inside $\triangle z_3z_2x_3$ (otherwise $z_2'$ lies in quadrilateral $x_1x_2z_2z_3$ and $z_2$ is outside the angle $\angle x_1'x_2'z_2' = \angle x_1'x_2'x_3'$ while it should be in $\triangle x_1'x_2'x_3'$, the same argument for $z_3'$).

So, the segment $z_2'z_3'$ is inside $\triangle z_3z_2x_3$ and parallel to its side $z_2z_3$ since they are both midsegments (for $\triangle x_1'x_2'x_3'$ and $\triangle x_1x_2x_3$ respectively) and parallel to $l$. Then the length of $z_2'z_3'$ is at most length of $z_2z_3$, but, on the other hand, we have
$$|z_2'z_3'| = |z_1x_1'| > |z_1x_1| = |z_2z_3|$$
and get a contradiction.
\end{proof}

\begin{proposition}
\label{4-per-outer}
    For any $z \in \partial X$ there is at most one 4-periodic trajectory passing through $z$.
\end{proposition}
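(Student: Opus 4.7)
The plan is to show that the four tangency points of any 4-periodic orbit through a given $z_1 \in \partial X$ are determined uniquely by $z_1$, so two such orbits must coincide. To set up, I would first apply Varignon's theorem to the orbit quadrilateral $x_1 x_2 x_3 x_4$: its midpoints $z_1, z_2, z_3, z_4$ form an inscribed parallelogram in $\partial X$, i.e., $z_1 + z_3 = z_2 + z_4$. Writing $v_i$ for the unit tangent to $\partial X$ at $z_i$ and $a_i = |z_i x_i| = |z_i x_{i+1}|$ for the common half-length of the orbit side sitting on $T_i$, the four midpoint conditions translate into the cyclic vector system $z_{i+1} - z_i = a_i v_i + a_{i+1} v_{i+1}$ for $i = 1, \dots, 4$.

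The key step is to extract from this system the geometric conclusion that opposite tangent lines are parallel: $T_1 \parallel T_3$ and $T_2 \parallel T_4$. One efficient route is the variational principle for outer billiards (tangency $n$-gons of periodic orbits are critical points of the shoelace area functional on inscribed $n$-gons), whose Euler--Lagrange condition reads $v_i \parallel z_{i+1} - z_{i-1}$. For $n = 4$, this makes the diagonal $z_2 z_4$ parallel to both $T_1$ and $T_3$ (hence $T_1 \parallel T_3$), and symmetrically $T_2 \parallel T_4$. Alternatively, the same parallelisms can be obtained directly from the four cycle equations combined with $z_1 - z_2 + z_3 - z_4 = 0$, which is closer in spirit to the self-contained argument used in Proposition~\ref{3-per-outer}.

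Once $T_1 \parallel T_3$ and $T_2 \parallel T_4$ are in hand, strong convexity of $\partial X$ finishes the proof: each tangent direction is realized by exactly two boundary points. So $z_3$ is uniquely determined as the antipode of $z_1$, i.e., the other point of $\partial X$ whose tangent is parallel to $T_1$. The direction of $z_3 - z_1$ is thus fixed, and the second parallelism forces the tangent at $z_2$ to be parallel to $z_1 z_3$; again only two points of $\partial X$ have that tangent direction, and the counterclockwise orientation of the orbit selects $z_2$ uniquely, with $z_4$ the remaining point. Therefore $z_1, z_2, z_3, z_4$ are all determined by $z_1$, so two 4-periodic orbits through $z_1$ share their tangency set and hence (via $x_i = T_{i-1} \cap T_i$) the orbit itself, and coincide.

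The hard part is the tangent-parallelism step: translating the four vector cycle equations into the clean pair $v_1 \parallel v_3$, $v_2 \parallel v_4$. The variational shortcut is essentially one differentiation of a shoelace sum, but a self-contained derivation in the geometric style of Proposition~\ref{3-per-outer} requires a careful algebraic reduction of the cyclic system, invoking strong convexity to exclude degeneracies. After this step the rest is just the antipode dichotomy plus orientation bookkeeping.
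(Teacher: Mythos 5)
Your argument hinges on the claim that for any 4-periodic outer billiard orbit the opposite tangent lines are parallel, $T_1\parallel T_3$ and $T_2\parallel T_4$ (equivalently, that the orbit quadrilateral $x_1x_2x_3x_4$ is a parallelogram, or that the tangency quadrilateral satisfies $v_i\parallel z_{i+1}-z_{i-1}$). This claim is false, and the variational principle you invoke to get it is the wrong one: critical points of the area functional on \emph{inscribed} polygons with the condition $v_i\parallel z_{i+1}-z_{i-1}$ characterize \emph{symplectic} billiard periodic orbits, whereas outer billiard periodic orbits correspond to \emph{circumscribed} polygons critical for area, and that Euler--Lagrange condition is exactly the midpoint condition already built into the definition -- it gives no extra information. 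Concretely, since $z_2-z_4=\tfrac12\bigl((x_2-x_1)+(x_3-x_4)\bigr)$, the condition $v_1\parallel z_2-z_4$ is equivalent to $x_1x_2\parallel x_3x_4$, and nothing forces this: take any convex quadrilateral $Q$ that is not a parallelogram and build a smooth strongly convex body inscribed in $Q$ and tangent to each side at its midpoint (glue small convex arcs at the four midpoints, exactly as in the paper's 8-periodic construction in Section 2.3); then $Q$ is a 4-periodic orbit with non-parallel opposite sides. Your proposed ``self-contained'' alternative also cannot work: summing the cyclic relations $z_{i+1}-z_i=a_iv_i+a_{i+1}v_{i+1}$ and using Varignon ($z_1+z_3=z_2+z_4$) only yields $\sum a_iv_i=0$ with $a_i>0$, which is satisfied by many quadruples of pairwise non-parallel directions. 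The analogy with period 3 (where the midpoint triangle's sides are parallel to the tangents, giving the outer--symplectic correspondence used in Section 3.1) genuinely breaks at period 4: the Varignon parallelogram's sides are parallel to the \emph{diagonals} $x_1x_3$, $x_2x_4$, not to the tangent lines. Since every step after the parallelism claim depends on it (the ``antipode'' determination of $z_3$, and the determination of $z_2,z_4$), the proof does not go through.

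For comparison, the paper's proof does not try to show the tangency points are determined by $z_1$ alone (they are not). It takes two hypothetical 4-periodic orbits sharing the tangency point $z_1$, uses only the automatic Varignon fact that each tangency quadruple forms an inscribed parallelogram with vertex $z_1$, shows via strong convexity that the angle of one parallelogram at $z_1$ is nested strictly inside the other, and then derives a contradiction by a short affine-coordinate computation: in the basis given by the sides of the larger-angle parallelogram, one vertex of the smaller parallelogram is forced into the open convex hull of two boundary points, contradicting convexity. If you want to salvage your approach, you would need to replace the false parallelism step by an argument of this comparative type, pitting the two orbits against each other rather than trying to rigidify a single orbit.
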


\begin{proof}
    Let $l$ be a tangent line with tangency point $z_1 = z$ and assume we have two 4-periodic orbits $(x_1, x_2, x_3, x_4)$ and $(x_1', x_2', x_3', x_4')$ with tangency points $z_1, z_2, z_3, z_4$ and $z_1, z_2', z_3', z_4'$ respectively. Since the domain is strongly convex, it lies inside both quadrilaterals $x_1x_2x_3x_4$ and $x_1'x_2'x_3'x_4'$. In particular the midpoints (which are tangency points) of one quadrilateral lie inside the other and form a parallelogram (e.g. $z_1z_2 \parallel x_1x_3 \parallel z_3z_4$ and $z_2z_3 \parallel  x_2x_4 \parallel  z_4z_1$). 


\begin{figure}[h]
    \centering
    \includegraphics[width=125mm]{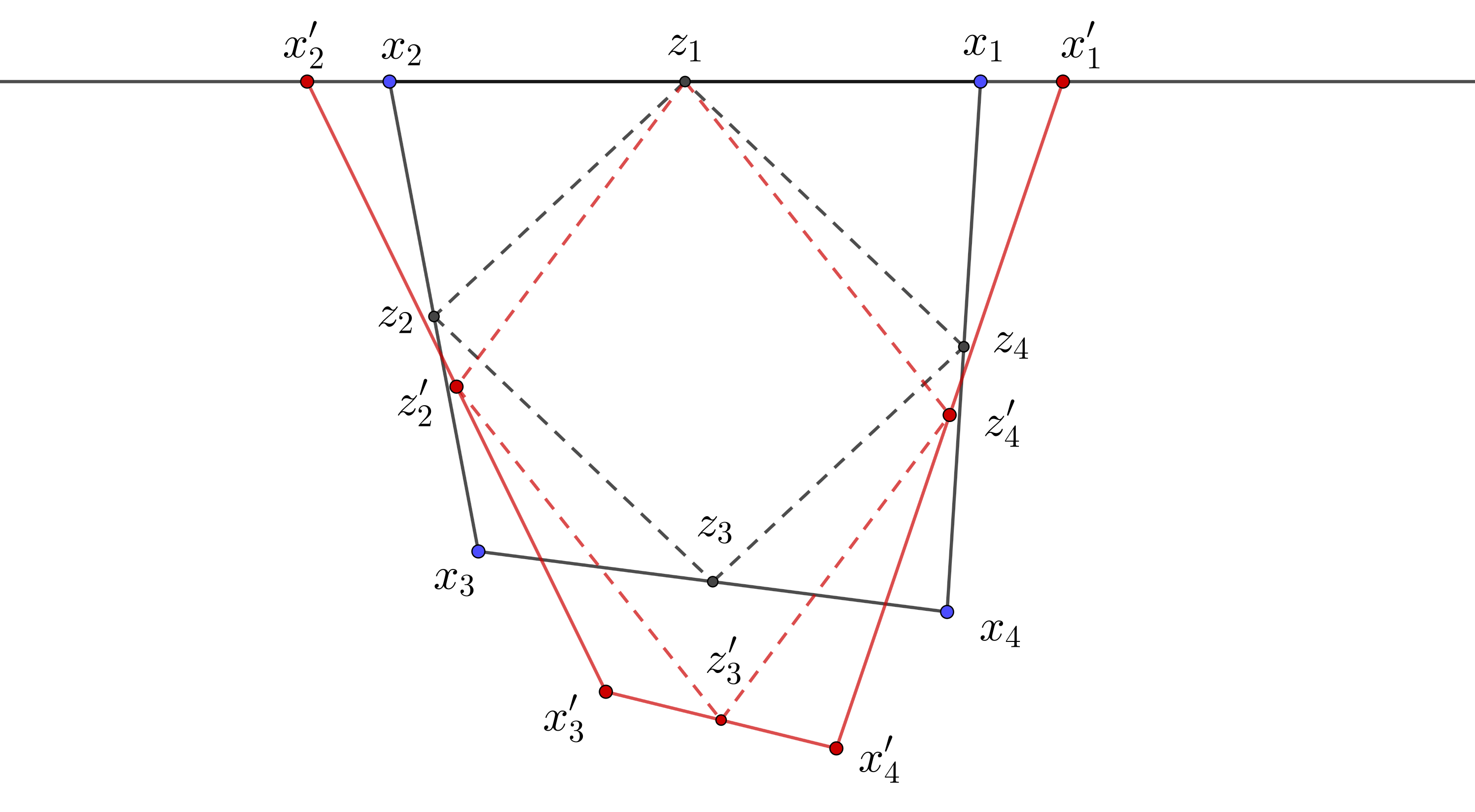}
    \caption{Two 4-periodic orbits}
    \label{two-4-periodic}
\end{figure}

    Without loss of generality assume that $x_1$ lies between $x_1'$ and $z_1$, which implies that $x_2$ lies between $x_2'$ and $z_1$. Then midpoints $z_2'$ and $z_4'$ lie inside the angle $\angle z_2z_1z_4$ (from strong convexity of $X$), hence the angle $\angle z_2'z_1z_4'$ is less than angle $\angle z_2z_1z_4$.

    Now we prove that a strongly convex body $X$ cannot contain two different inscribed parallelograms such that they have a common vertex $z$ and the angle at this vertex of one parallelogram lies inside the other one (strictly inside for both sides). Let $z$ be an origin. Denote vectors of sides from $z$ by $v_1, v_2$ for a parallelogram with a bigger angle and $w_1, w_2$ for a parallelogram with a smaller one. Take a basis $v_1, v_2$. Consider vectors from point $z$ which are inside the angle between $v_1$ and $v_2$ and with ends on $\partial X$. They all, except $v_1 +v_2$, have one of the coordinates bigger than 1 and the other bigger than 0 but smaller than 1 (from strong convexity of $X$).


\begin{figure}[h]
    \centering
    \includegraphics[width=60mm]{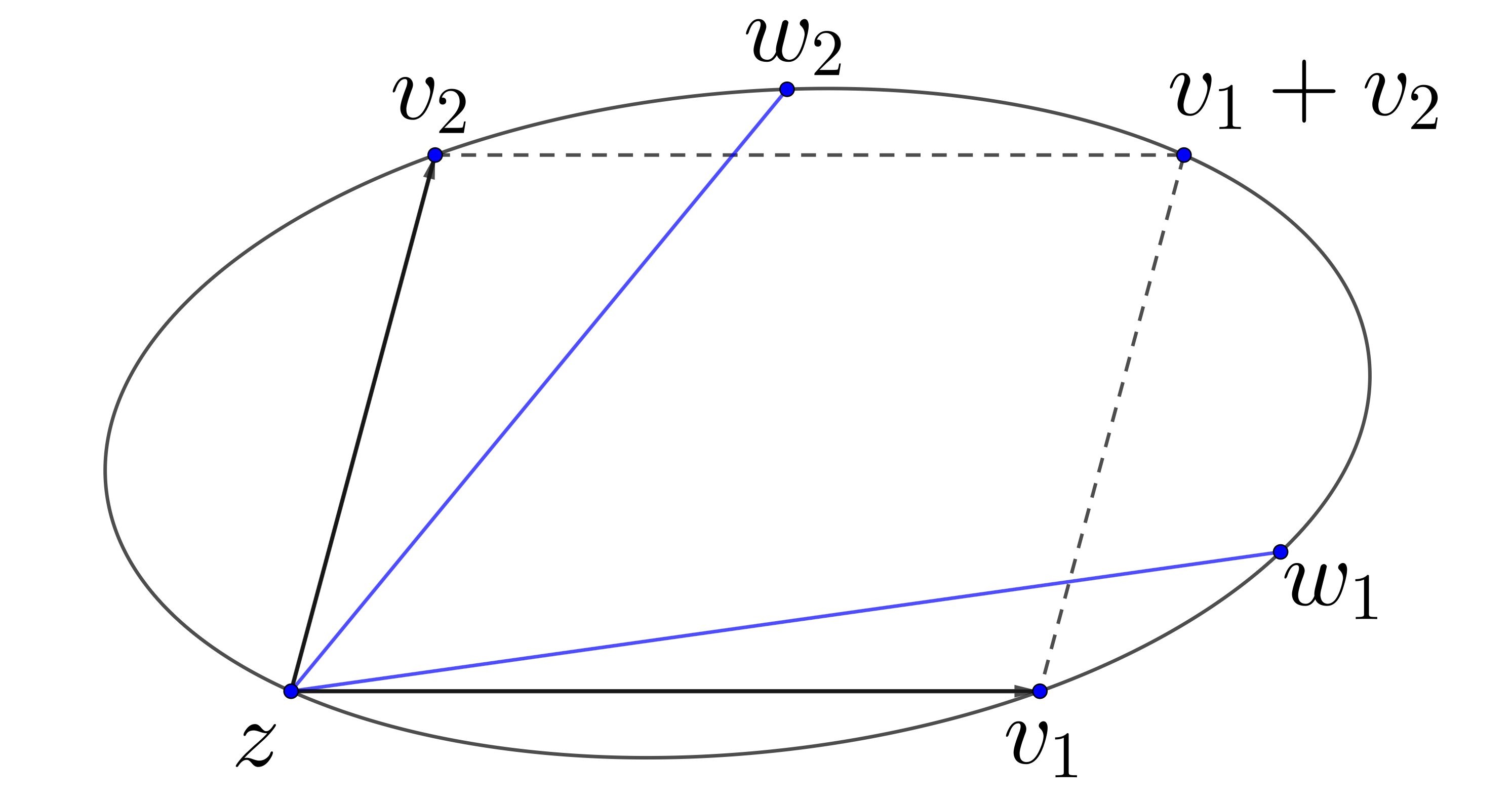}
    \caption{Parallelogram inside a convex domain}
    \label{2parallelograms}
\end{figure}

    If $w_1 = (a_1, b_1)$ and $w_2 = (a_2, b_2)$, then $w_1 + w_2 = (a_1 + a_2, b_1 + b_2)$ and we know it also lies on $\partial X$. Notice that $w_1+ w_2 \not= v_1 + v_2$ otherwise the parallelogram with a smaller angle is inside the parallelogram with a bigger angle. So, without loss of generality $a_1 + a_2 > 1$ and $b_1 + b_2 < 1$. Hence $b_1 < 1$ and $b_2 < 1$. Then we write 
    \[
    w_1 = \frac{a_1}{a_1 + a_2}(w_1 + w_2) + \frac{a_2b_1 - a_1b_2}{a_1 + a_2}v_2
    \]
    \[
    w_2 = \frac{a_2}{a_1 + a_2}(w_1 + w_2) - \frac{a_2b_1 - a_1b_2}{a_1 + a_2}v_2
    \]
    For one of them both coefficients are positive (they cannot be zero as $w_1$, $w_2$ are linearly independent and not parallel to $v_2$). Without loss of generality it holds for $w_1$. Then their sum is 
    \[
    \frac{a_1}{a_1 + a_2} + \frac{a_2b_1 - a_1b_2}{a_1 + a_2} < 1
    \]
    because $a_2b_1 - a_1b_2 < a_2b_1 < a_2$.
    So, we get that $w_1$ lies in the convex hull of $w_1+ w_2$ and $v_2$ and this contradicts that they all lie on $\partial X$.
\end{proof}

\begin{remark}
    One can prove a weaker statement that there are no nearby 4-periodic trajectories passing through the same tangency point just using the figure \ref{two-4-periodic}. In this case point $z_3$ is inside parallelogram $z_1z_2'z_3'z_4'$ which contradicts that it lies on $\partial X$.
\end{remark}

\begin{corollary}
\label{3-4-per}
    The sets of 3 and 4-periodic outer billiard orbits for a strongly convex domain in $\mathbb{R}^2$ with a smooth boundary have empty interiors. Moreover they have measure zero. 
\end{corollary}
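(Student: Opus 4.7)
The plan is to deduce this corollary directly from Propositions~\ref{3-per-outer} and~\ref{4-per-outer} by studying the ``footpoint map'' $\pi: \mathbb{R}^2 \setminus X \to \partial X$ that sends $x$ to the tangency point of the chosen tangent line from $x$ to $X$; equivalently $\pi(x) = (x + F(x))/2$. This map is smooth, and each fiber $\pi^{-1}(z)$ is a one-dimensional ray from $z$ along the tangent line at $z$ (the side selected by the orientation convention). First I would show that the restriction $\pi|_{P_n}$ to the set $P_n$ of $n$-periodic points is injective for $n = 3, 4$: if $x \in P_n$, then $z = \pi(x)$ is a tangency point of the $n$-periodic trajectory through $x$, and since the tangency points of a single trajectory are pairwise distinct (being midpoints of distinct sides of a non-degenerate polygon circumscribing the strongly convex $X$), Proposition~\ref{3-per-outer} (resp.~\ref{4-per-outer}) forces $z$ to determine both the trajectory and the specific orbit point $x$ on it.

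For the empty interior statement I would argue by contradiction: if $P_n$ contained an open ball $B$, pick $x_0 \in B$ and examine $B \cap \pi^{-1}(\pi(x_0))$; this is a nonempty open subset of the one-dimensional ray $\pi^{-1}(\pi(x_0))$, so it is infinite, violating injectivity of $\pi|_{P_n}$. For the measure zero statement I would pass to coordinates $(z, t) \in \partial X \times (0, \infty)$ on the exterior of $X$, where $t$ is signed distance along the appropriate tangent ray from $z$; this is a diffeomorphism under which $\pi$ becomes projection to the first factor. Injectivity then says each vertical slice $\{z\} \times (0, \infty)$ meets $P_n$ in at most one point, so Fubini's theorem gives that $P_n$ has measure zero in these coordinates, hence also in $\mathbb{R}^2 \setminus X$ since the coordinate change is a diffeomorphism.

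I do not anticipate a genuine obstacle: once Propositions~\ref{3-per-outer} and~\ref{4-per-outer} are in hand, the corollary reduces to an injectivity observation together with a single application of Fubini. The only routine check is that the parametrization $(z, t) \mapsto z + t\, \tau(z)$ (with $\tau(z)$ the unit tangent in the appropriate direction) is a diffeomorphism from $\partial X \times (0, \infty)$ onto $\mathbb{R}^2 \setminus X$, which is immediate from strong convexity and smoothness of $\partial X$.
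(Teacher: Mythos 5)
Your proposal is correct and follows essentially the same route as the paper: empty interior comes from intersecting a hypothetical open set of periodic points with a tangent line, and measure zero comes from Fubini applied to the foliation of the exterior by tangent rays, with Propositions~\ref{3-per-outer} and~\ref{4-per-outer} limiting the periodic points on each ray. The only (harmless) difference is that you apply Fubini globally and directly, via the parametrization $(z,t)\mapsto z+t\,\tau(z)$ of the exterior, whereas the paper argues by contradiction at a point of positive density, partitioning a small neighborhood by tangent lines.
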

\begin{proof}
    Assume there is an open set $U$ of $n$-periodic points. Take any $x_1 \in U$ and let $l$ be a tangent line from $x_1$ to $X$ and $z$ be a tangency point. Then $U \cap l$ contains an nonempty open interval of $n$-periodic points (any open ball with center in $x_1$ intersects with $l$ by an open interval) and hence there is an infinite set of $n$-periodic trajectories through $z$ which contradicts for $n = 3$ and $n = 4$ to propositions.

   Moreover, the set of 3 and 4-periodic points has measure zero. Indeed, let $P$ be the set of 3-periodic points, and assume by contradiction that it has Lebesgue measure $> 0$. There exists a point $x_1 \in P$ such that any neighborhood of $x_1$ intersects $P$ with positive measure. Since $x_1$ lies outside of the domain $X$, consider a tangent line to $\partial X$ going through $x_1$, more precisely let a point $z \in \partial X$ such that $T_z\partial X$ contains $x_1$.
   
Now consider a neighborhood $V$ of $x_1$. For any $z' \in \partial X$, endow $T_{z'}\partial X$ with the one-dimensional Lebesgue measure, and consider the intersection $T_{z'}\partial X \cap V$ as a subset of the latter line. If $V$ is sufficiently small, there is a neighborhood $W \subset \partial X$ of $z$ such that the family of sets $(T_{z'} \partial X \cap V)_{z' \in W}$ is a partition of $V$ . By Fubini’s theorem, there is a point $z' \in W$ such that $T_{z'} \partial X \cap V \cap P$ has non-zero measure as a subset of the line $T_{z'} \partial X$. In particular, $T_{z'} \partial X$ intersects $P$ infinitely many times which contradicts Proposition 2.1. The proof for period 4 is similar.  
   
\end{proof}

\begin{remark}
For higher periods the same idea does not work, one can construct a convex domain $X$ with two 5-periodic trajectories passing through the same tangency point. To construct such an example, it is enough to find two 5-gons which have the property that they have a common midpoint, and the other midpoints of each 5-gon lie inside the other. See figure \ref{5-gon_Ivrii}.
\end{remark}


\begin{figure}[h]
    \centering
    \includegraphics[width=90mm]{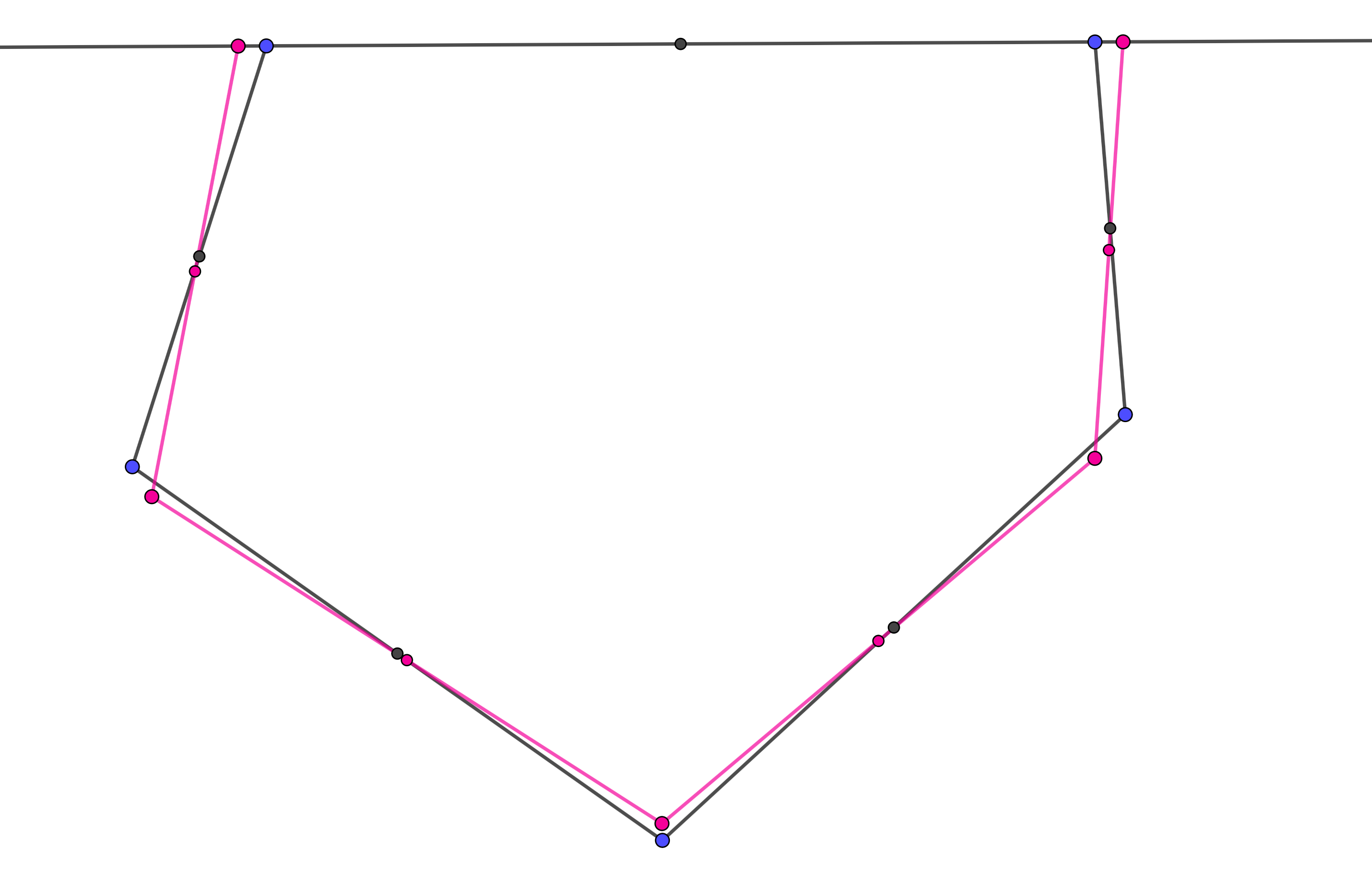}
    \caption{Two 5-periodic orbits}
    \label{5-gon_Ivrii}
\end{figure}

\begin{remark}
    This method also does not work for inner (conventional) billiards. As shown by Vorobetz \cite{vorobets94} (Lemma 1) one-dimensional family of 3-periodic orbits can pass through a boundary point. 
\end{remark}

\subsection{The cases of $(2n + 1, n)$ and $(2n, n-1)$}

 For any $n$-periodic outer billiard orbit we define a winding number as follows. Consider the orbit as a polygon with interior angles $\alpha_i$ and external angles $\beta_i = \pi - \alpha_i$. Then the winding number of this orbit is $m = \frac{1}{2\pi}\sum \beta_i$. Denote by $(n,m)$ an $n$-periodic orbit with winding number $m$. Notice that $\sum_{i = 1}^{n} \alpha_i = \pi (n - 2m)$, hence $0 < 2m < n$.

Assume we have an open set $U$ of $(n,m)$-periodic points, then for any $x \in U$: $F^n(x) = x$ and $dF^n|_x = Id$, where $F^n$ is the $n$-th iterate of $F$. The explicit form for the differential of the outer billiard map was found by Katok and Gutkin in \cite{gutkin_katok95}: for any point $x_0 \in \mathbb{R}^2 \setminus X$ with tangent line $l$ to $X$ and a tangency point $z \in \partial X$ choose a coordinate system at $x_0$ such that $x$-axis has the direction of vector $zx_0$ and $y$-axis as outer normal vector at $z$. Then in coordinates $\partial/\partial x$ and $\partial/\partial y$ the differential of the outer billiard map at $x_0$ has the following form:
\[
dF = \begin{pmatrix} -1 & -\frac{2\rho}{r} \\ 0 & -1 \end{pmatrix}
\]
where $\rho$ is a radii of curvature of $\partial X$ at $z$ and $r = |x_0z|$.


\begin{figure}[h]
    \centering
    \includegraphics[width=110mm]{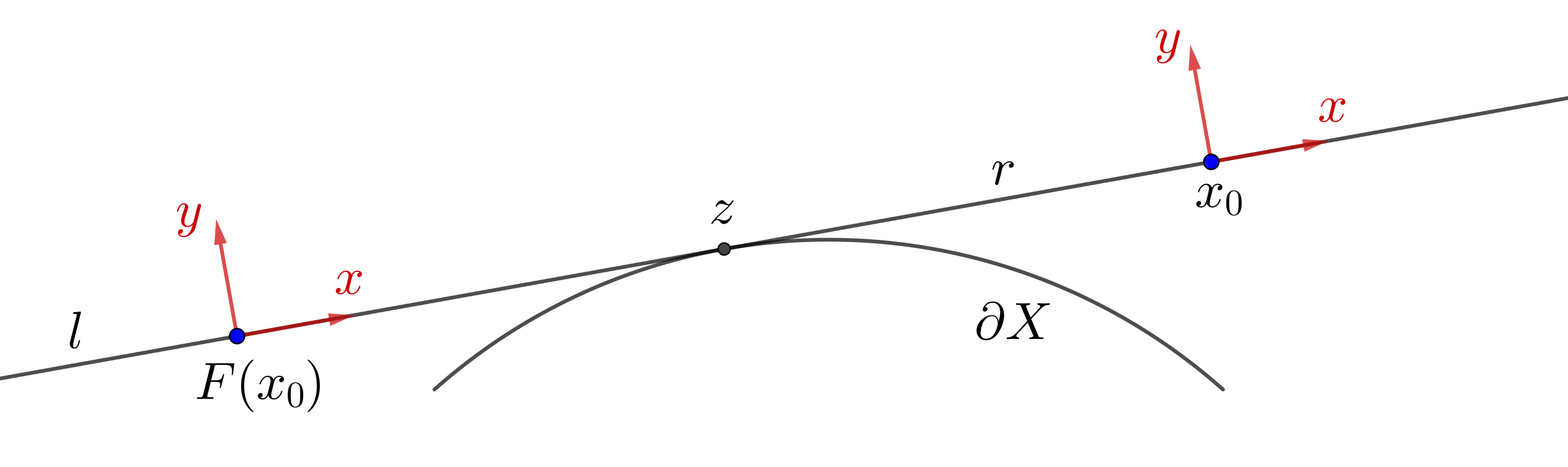}
    \caption{Coordinates for $dF$}
    \label{dF}
\end{figure}

Now consider an $n$-periodic orbit starting from $x_1$ as a polygon, denote its vertices as $x_1,..., x_n$ and tangency points as $z_1,..., z_n$ ($z_i$ is a midpoint between $x_i$ and $x_{i+1}$). Let $\rho_i$ be a radii of curvature of $X$ at point $z_i$, $\alpha_i$ be an interior angle between sides $x_ix_{i+1}$ and $x_{i+1}x_{i+2}$, $r_i = |x_iz_i|$. Then differential of $F^n$ at $x_1$ is 
\[
dF^n|_{x_1} = R(-\beta_n)dF|_{x_n}...R(-\beta_1)dF|_{x_1}
\]
where $R(\phi)$ is a matrix of rotation by angle $\phi$. Indeed, after each $dF|_{x_i}$ we should change coordinates to apply $dF|_{x_{i+1}}$ and it is given by the rotation by angle $-\beta_i = \pi + \alpha_i$. We rewrite this product denoting the matrix of differential $dF|_{x_i}$ by $-A_i$ and having $R(-\beta_i) = - R(\alpha_i)$:
\[
dF^n|_{x_1} = R(\alpha_n)A_n...R(\alpha_1)A_1
\]
A matrix of the form $ \begin{pmatrix} 1 & s \\ 0 & 1 \end{pmatrix}$ is called a shear matrix and corresponds to a horizontal shear transformation. So, the differential of the $n$-th iterate is equal to the product of rotations and shear matrices with positive elements $s = \frac{2\rho}{r}$, as the curvature at each boundary point is positive. It appears that this product cannot be identity for some sets of angles.

\begin{lemma}
\label{shear-rotation}
    Let $A_i = \begin{pmatrix} 1 & s_i \\ 0 & 1 \end{pmatrix}$ be a shear matrix with positive $s_i$ and assume that we have for $0 < \alpha_i < \pi$ $$R(\alpha_n)A_n...R(\alpha_1)A_1 = Id.$$ 
    Then $\sum_{i = 1}^n \alpha_i > 2\pi$.
\end{lemma}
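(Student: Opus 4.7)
The plan is to lift each factor of the product to the universal cover of the circle of directions in $\mathbb{R}^2$ and to keep track of how a single direction moves.

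First, I would observe that each factor induces an orientation-preserving homeomorphism of $S^1$: the rotation $R(\alpha_i)$ just rotates by $\alpha_i$, and for $s > 0$ the shear $A_s$ fixes the two horizontal directions $\theta = 0, \pi$ while moving every other direction clockwise. Indeed, on each open strip $(k\pi, (k+1)\pi)$ one has $\cot f_s(\theta) = \cot\theta + s > \cot\theta$, hence $f_s(\theta) < \theta$. Lifting each factor canonically via the paths $t \mapsto A_{ts_i}$ and $t \mapsto R(t\alpha_i)$, the shear lifts to a homeomorphism $\tilde f_{s_i}:\mathbb{R} \to \mathbb{R}$ that fixes every multiple of $\pi$ and is strictly decreasing on each closed strip $[k\pi, (k+1)\pi]$ (which it preserves), while $R(\alpha_i)$ lifts to translation by $\alpha_i$.

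Next, the topological input. Because the product equals the identity in $SL(2,\mathbb{R})$, the composition of lifts is a deck transformation of the cover $\mathbb{R} \to S^1$, hence translation by $2\pi K$ for some $K \in \mathbb{Z}$. I would evaluate this composition at $\theta_0 = 0$ and track the successive angles $\theta_0, \theta_1, \ldots, \theta_{2n} = 2\pi K$, and show $K \geq 1$. Since $\tilde f_{s_1}$ fixes $0$, we get $\theta_1 = 0$, and then $\theta_2 = \alpha_1 \in (0, \pi)$. Past this point the trajectory cannot return to values $\leq 0$: rotations contribute strictly positive increments, and each shear preserves its ambient strip $[k\pi, (k+1)\pi]$, so it cannot push a positive value to a nonpositive one. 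Hence $\theta_{2n} > 0$, forcing $K \geq 1$.

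Finally, setting $\Delta_i = \tilde f_{s_i}(\theta_{2i-2}) - \theta_{2i-2} \leq 0$ (the signed shear displacement at step $i$), the telescoping relation
\[
2\pi K \;=\; \theta_{2n} - \theta_0 \;=\; \sum_{i=1}^n \alpha_i \;+\; \sum_{i=1}^n \Delta_i
\]
gives $\sum \alpha_i \geq 2\pi K \geq 2\pi$. The inequality is strict because $\Delta_2 < 0$: the input $\theta_2 = \alpha_1 \in (0,\pi)$ is not a fixed point of the nontrivial shear $\tilde f_{s_2}$.

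The main subtlety is the topological setup: choosing canonical lifts carefully so that the identity hypothesis really does force the product of lifts to be translation by exactly $2\pi K$. Once that is in place, the trajectory argument for $K \geq 1$ and the final summation are elementary.
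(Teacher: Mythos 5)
Your proposal is correct and follows essentially the same route as the paper's proof: you track the direction of a single ray, observe that each shear moves directions (weakly) clockwise without crossing the horizontal while each rotation advances them by $\alpha_i$, and conclude that the identity forces a net winding of at least one full turn, with strictness supplied by the second shear acting nontrivially on $\theta_2=\alpha_1\in(0,\pi)$ --- your universal-cover lift and telescoping sum is a more formal rendering of the paper's ``quasi-direction'' bookkeeping. One wording slip to fix: the lift $\tilde f_{s_i}$ is an increasing homeomorphism of $\mathbb{R}$ fixing every multiple of $\pi$ and satisfying $\tilde f_{s_i}(\theta)<\theta$ on the interior of each strip, so ``strictly decreasing on each closed strip'' should be rephrased as ``strictly decreases every non-horizontal direction while preserving each strip''.
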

\begin{proof}
    Consider all halflines $l$ through the origin and define a quasi-direction of a halfline as follows:
    \[
    q(l) =
    \begin{cases}
        0, & \text{if $l$ has a direction of the positive $x$-axis;} \\
        1, & \text{if $l$ lies in the upper halfplane;}\\
        2, & \text{if $l$ has a direction of the negative $x$-axis;}\\
        3, & \text{if $l$ lies in the lower halfplane.}
    \end{cases}
    \]
    Each shear matrix of above form rotates a halfline clockwise by some angle and preserves its quasi-direction. Each rotation rotates halflines counterclockwise by angles $0 < \alpha_i < \pi$ and does not decrease quasi-direction except for the cases when the line goes from 3 to 0 or 1. Consider a halfline $l_0$ spanned by the vector $(1,0)^T$. After the first shear and the first rotation $l_0$ has a quasi-direction 1. The second shear preserves the quasi-direction of $l_0$ equaled 1 and rotates $l_0$ clockwise by nonzero angle $\phi_2$. Having  $R(\alpha_n)A_n...R(\alpha_1)A_1 = Id$ means that after all shears and rotations $l_0$ should return to quasi-direction 0. But to return back to 0, $l_0$ should be rotated at least for one whole cycle of quasi-directions 0,1,2,3,0. If  $\sum_{i = 1}^n \alpha_i \le 2\pi$ then $\sum_{i = 1}^n \alpha_i - \sum_{i = 2}^n\phi_i < 2\pi$ and the angle of whole rotation of $l_0$ is less then $2\pi$.
\end{proof}

The Theorem \ref{star_orbits_theorem} follows from the Lemma \ref{shear-rotation} since any $(n, m)$-orbit has the sum of angles $\sum \alpha_i = \pi (n - 2m)$ which is equal to $\pi$ for $(2n + 1, n)$-periodic orbit and $2\pi$ for $(2n, n - 1)$-periodic orbit.

The idea of proving that the differential of $n$-th iterate of the outer billiard map cannot be identity apparently does not work for a bigger sum of angles. One can check that for any $n \ge 5$, choosing $\alpha_1 = ... = \alpha_n = \frac{(n-2)\pi}{n}$ and $s_1 = ...= s_n = 4\cot{\frac{2\pi}{n}}$ we get:
\[
\left(\begin{pmatrix} \cos{\frac{(n-2)\pi}{n}} & -\sin{\frac{(n-2)\pi}{n}} \\ \sin{\frac{(n-2)\pi}{n}} & \cos{\frac{(n-2)\pi}{n}} \end{pmatrix}\begin{pmatrix} 1 & 4\cot{\frac{2\pi}{n}} \\ 0 & 1 \end{pmatrix}\right)^n = \begin{pmatrix} 1 & 0 \\ 0 & 1 \end{pmatrix}.
\]

For example, for $n = 8$, we choose $\alpha_i = 3\pi/4$ and $s_i = 4$ and have:
\[
\left(\begin{pmatrix} \cos{3\pi/4} & -\sin{3\pi/4} \\ \sin{3\pi/4} & \cos{3\pi/4} \end{pmatrix}\begin{pmatrix} 1 & 4 \\ 0 & 1 \end{pmatrix}\right)^8 = \begin{pmatrix} 1 & 0 \\ 0 & 1 \end{pmatrix}.
\]

\begin{remark}
    This example does not work for $n=3$ and $n=4$ since $\cot{\frac{2\pi}{3}} < 0$ and $\cot{\frac{2\pi}{4}} = 0$.
\end{remark}

\subsection{Example of an 8-periodic union of two segments}

For $n = 8$ it is possible not only to find a point with the identical differential of the eighth iterate of the outer billiard map but also to have two intersected segments of 8-periodic points as in the figure $\ref{8-periodic}$.


\begin{figure}[h]
    \centering
    \includegraphics[width=125mm]{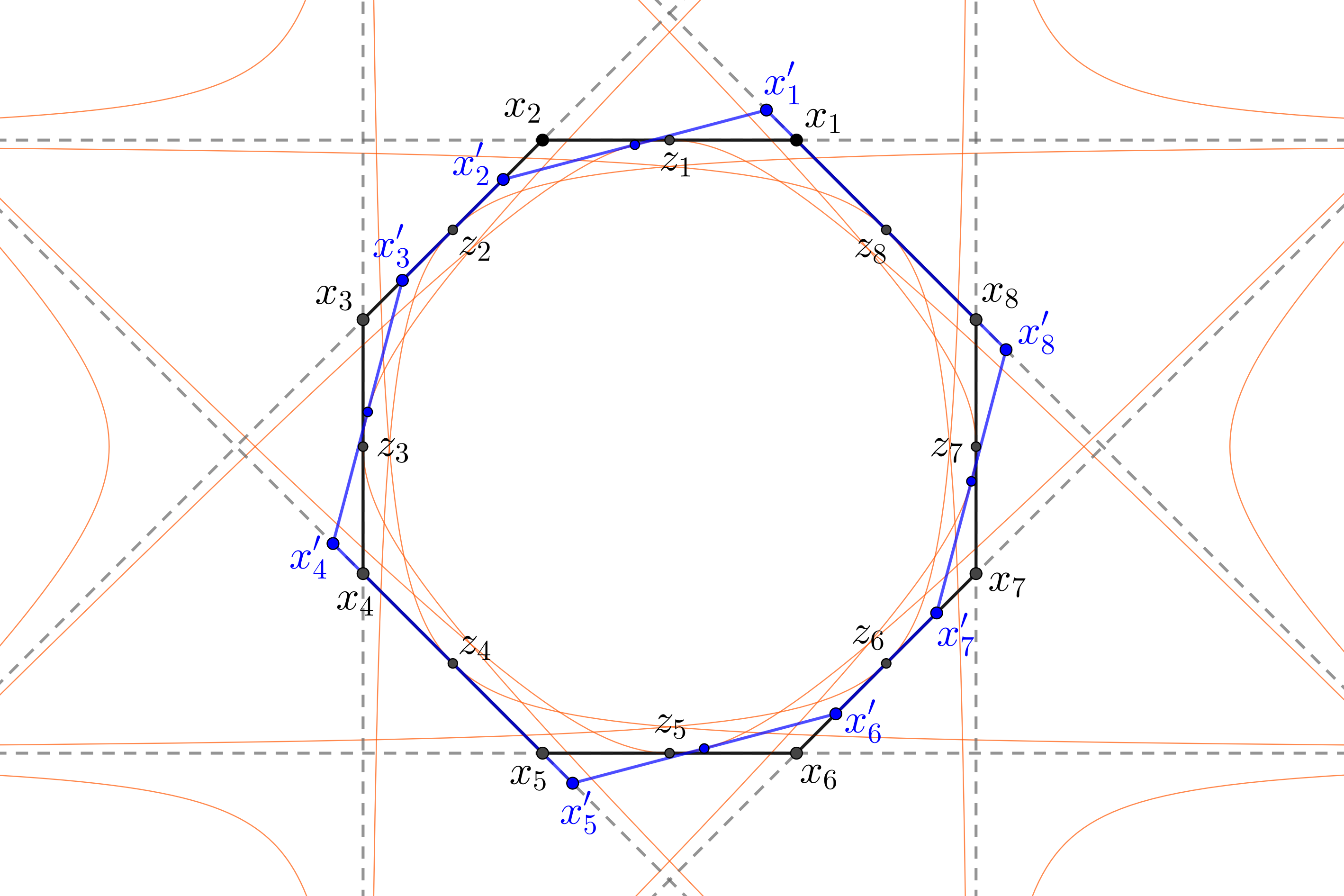}
    \caption{Two 8-periodic orbits}
    \label{8-periodic}
\end{figure}

Consider a regular 8-gon with vertices $x_1,...x_8$ and midpoints $z_1,...z_8$ ($z_i$ is a midpoint between $x_i$ and $x_{i+1}$). For each $z_i$ draw a hyperbola $h_i$ which touches $x_ix_{i+1}$ and has asymptotes $x_{i-1}x_{i}$ and $x_{i+1}x_{i+2}$. Each tangent segment to hyperbola with ends on the asymptotes is bisected by a tangency point \cite{wells-1991}. So, $h_i$ touches $x_ix_{i+1}$ at point $z_i$. Keep small pieces of each $h_i$ (intersection of the hyperbola with a small ball centered at $z_i$) and connect them together such that it turns to a closed smooth strongly convex curve. Now, one can check that there is a neighborhood $U$ of $x_1$ such that each point from segments $U \cap x_1x_2$ and $U \cap x_1x_8$ are 8-periodic. 

Indeed, if we consider a point $x_1'$ close to $x_1$ on the line $x_1x_8$, it reflects with respect to $h_1$ to point $x_2'$ on the line $x_2x_3$ (from the hyperbola property described above) and then reflects with respect to $h_2$ (same as with respect to $z_2$) to the point $x_3'$ on the line $x_2x_3$, and then $x_3'$  reflects with respect to $h_3$ to $x_4'$ on $x_4x_5$. Then $x_1x_1' = x_4x_4'$ from a symmetry. If we continue we finally get that $x_8'$ reflects with respect to $z_8$ to $x_1$. Similarly, we get for the points close to $x_1$ on the line $x_1x_2$.

\subsection{Higher dimensions}

For outer billiards in higher dimensions one should consider a convex body $X$ in the standard symplectic space $(\mathbb R^{2n}, \omega)$. At any point $z$ on the boundary $\partial X$ there is a one-dimensional \emph{characteristic} direction $JN_z \subset  \ker \omega|_{T_z\partial X}$, where $N_z$ is an outer normal vector at point $z \in \partial X$ and $J = \begin{pmatrix} 0 & I \\ -I & 0 \end{pmatrix}$ is a standard complex structure. Then for any point $x$ outside of $X$ there is a unique point $z$ on the boundary such that $xz$ is a tangent line with a characteristic direction. Then one can define an outer billiard map as before mapping $x$ to $y$ on the tangent line with characteristic direction such that the tangency point is a midpoint between $x$ and $y$. For a detailed introduction to higher outer (dual) billiards the reader is referred to \cite{tabach93}.

\begin{question}
    Is it possible that for some $z \in \partial X$ there is at least two 3-periodic orbits passing through $z$?
\end{question}
Another question connected with periodic points of higher dimensional outer billiards but slightly different from the Ivrii conjecture:
\begin{question}
    Is it possible that for some $z \in \partial X$ all outer billiard trajectories through $z$ are periodic?
\end{question}
For planar case the answer is no because of the existence of invariant curves with irrational rotation numbers far from $\partial X$,and so the invariant curves intersect with the tangent line through $z$ implying that their intersection points are not periodic. For higher dimensions the existence of invariant hypersurfaces is unknown. See \cite{moser73} for more details about invariant curves for outer billiards.

\section{Ivrii's conjecture for symplectic billiards}

Symplectic billiard was introduced by Albers and Tabachnikov \cite{albers-tabach2017} as a new inner billiard whose generating function is area. The precise definition of the symplectic billiard map is the following: for a strictly convex domain $X \subset \mathbb{R}^2$  the ray $xy$ ($x,y \in \partial X$) reflects to the ray $yz$ if $xz$ is parallel to the tangent line of $\partial X$ at point $y$.


\begin{figure}[h]
    \centering
    \includegraphics[width=70mm]{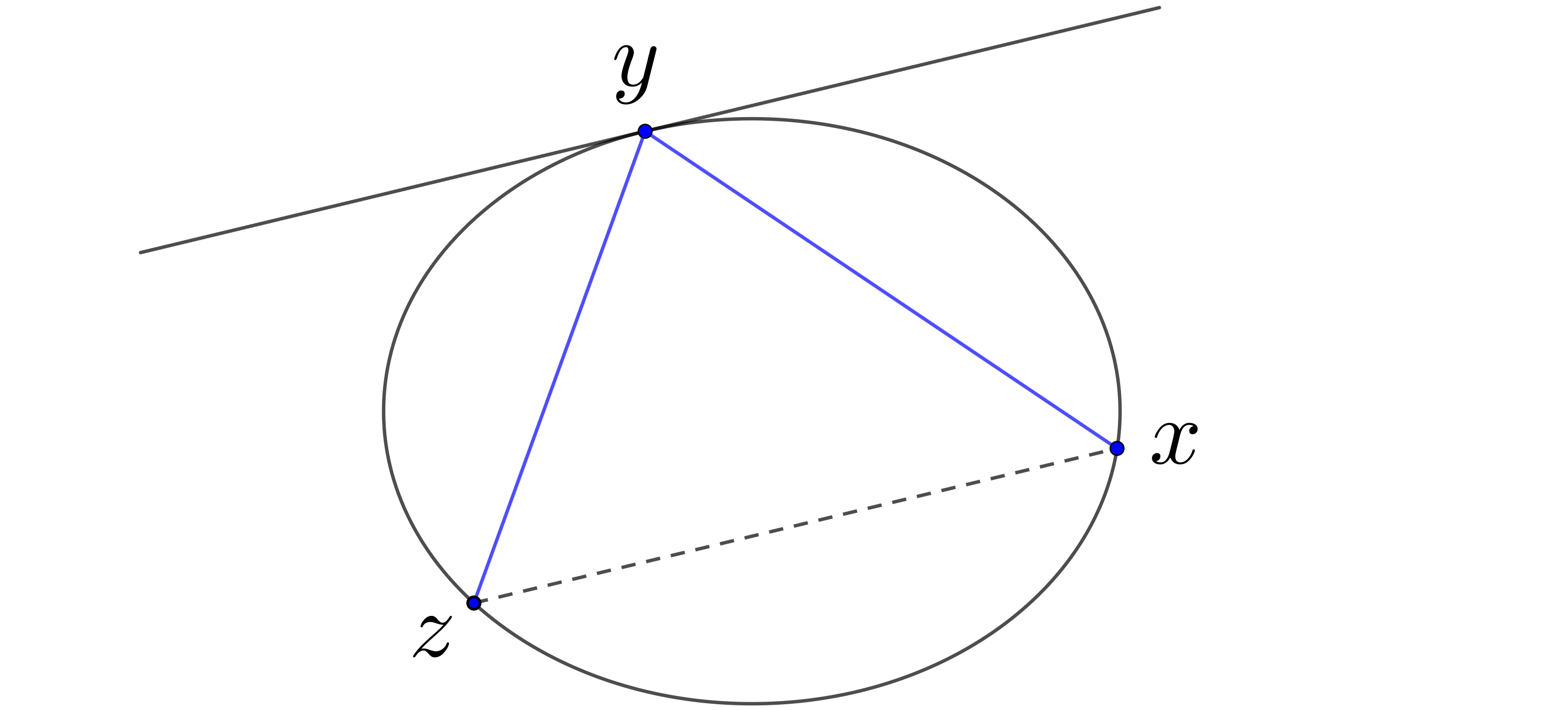}
    \caption{Symplectic billiard reflection law}
    \label{fig:enter-label}
\end{figure}

To define symplectic billiard in higher dimensions one needs the choice of a tangent line. Similarly to the outer billiard we consider the standard symplectic space $(\mathbb{R}^{2n},\omega)$ and a characteristic direction $JN_z$ for each boundary point $z \in \partial X$ (where $N_z$ is an outer normal vector at point $z \in \partial X$ and $J = \begin{pmatrix} 0 & I \\ -I & 0 \end{pmatrix}$). Then one can well define symplectic billiard.

Albers and Tabachnikov \cite{albers-tabach2017} proved using EDS that for planar symplectic billiards the set of 3-periodic points and 4-periodic points has an empty interior. Now we give easier geometric proofs for these periods in both planar and higher-dimensional cases. For period 3 we use the results for outer billiards. The idea of our proof for period 4 is similar to the proofs of Propositions \ref{3-per-outer} and \ref{4-per-outer} that through any point A on the boundary of a strictly convex body there is at most one 4-periodic orbit. This implies that the set of 4-periodic orbits has an empty interior.

\subsection{3-periodic orbits}

Let $X$ be a convex domain with a smooth boundary. Its 3-periodic symplectic billiard trajectories are given by the triangles $ABC$ such that each side is parallel to the characteristic line through an opposite point (e.g. $JN_A$ parallel to $BC$). The three characteristic lines through points $A$, $B$, $C$ lie in the same plane and intersect at points $A_1$, $B_1$, $C_1$. Then $A_1B_1C_1$ is an outer billiard orbit for $X$. Indeed, from the parallelism of tangent segments $A_1B_1$, $B_1C_1$, $C_1A_1$ with sides of the triangle $ABC$ one gets 3 parallelograms: $ABA_1C$, $ABCB_1$, and $AC_1BC$. Clearly, that $A$, $B$ and $C$ are midpoints of $B_1C_1$, $A_1C_1$ and $A_1B_1$ respectively. On the other hand, for any 3-periodic outer billiard trajectory $A_1B_1C_1$ we have that the triangle consisting of the midpoints of $A_1B_1C_1$ is a 3-periodic symplectic billiard trajectory.  So, we have a one-to-one correspondence between 3-periodic outer billiard trajectories and 3-periodic symplectic billiard trajectories, hence one-to-one (continuous) correspondence for 3-periodic orbits.

\begin{lemma}
    The set of 3-periodic orbits of symplectic billiard has an empty interior.
\end{lemma}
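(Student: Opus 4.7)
Plan: The strategy is to invoke the bijective continuous correspondence $(A,B,C) \leftrightarrow (A_1, B_1, C_1)$ between 3-periodic symplectic billiard trajectories and 3-periodic outer billiard trajectories that was constructed in the paragraph immediately preceding the lemma, and to transport the known outer billiard result, Corollary \ref{3-4-per}, to the symplectic setting.

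First, I would verify that this correspondence is a local homeomorphism between the two relevant phase spaces. In the planar case both the symplectic billiard phase space (a neighborhood in $\partial X \times \partial X$) and the outer billiard phase space (the exterior of $X$) are two-dimensional; the map $\Phi : (A,B,C) \mapsto (A_1,B_1,C_1)$ is smooth because the characteristic line at each boundary point depends smoothly on that point, and its inverse assigns to the outer triangle the triple of midpoints of its sides, which is manifestly smooth and linear in the vertices. Strong (or at least strict) convexity of $X$ guarantees that the characteristic lines at distinct boundary points intersect transversely, so $\Phi$ is in fact a local diffeomorphism wherever it is defined, and both directions preserve the periodicity property by the parallelogram computation already spelled out above the lemma.

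Second, I would argue by contradiction. If the set of 3-periodic symplectic orbits contained an open set $U$ of the symplectic phase space, then $\Phi(U)$ would be a non-empty open set of 3-periodic outer billiard orbits in the exterior of $X$, directly contradicting Corollary \ref{3-4-per}. If the lemma is intended to cover higher dimensions as well, the main obstacle is that each 3-periodic symplectic orbit lies in its own affine 2-plane, and these 2-planes vary across the orbit family; one then has to show that an open family of 3-orbits restricts, on suitable planar slices, to an open family of planar outer billiard 3-orbits on a planar cross-section of $\partial X$, so that Corollary \ref{3-4-per} can still be invoked. This can be handled by a Fubini-type argument in the spirit of the second half of the proof of Corollary \ref{3-4-per}, reducing the higher-dimensional case to the planar correspondence already established.
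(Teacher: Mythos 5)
Your planar argument is essentially the paper's: use the bijective continuous correspondence between 3-periodic symplectic and outer billiard trajectories and transport the contradiction to Corollary \ref{3-4-per}; the extra verification that the correspondence is (locally) a homeomorphism, so that the image of an open set of periodic points is open, is exactly the point the paper leaves implicit, and that part of your proof is fine.

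The higher-dimensional part, however, has a genuine gap. You propose to slice: fix the affine $2$-plane of an orbit, view the orbit as a planar outer billiard $3$-orbit of the cross-section $X\cap P$, and invoke Corollary \ref{3-4-per} via a Fubini-type argument. But the key step --- that an open family of $3$-orbits in the $(4n-2)$-dimensional symplectic phase space produces, in some \emph{fixed} plane $P$, an \emph{open} family of planar $3$-orbits in the $2$-dimensional phase space of the slice --- is not justified and should not be expected to hold: the space of affine $2$-planes in $\mathbb{R}^{2n}$ has dimension $6n-6\ge 4n-2$ for $n\ge 2$, so the subfamily of orbits lying in a fixed plane is generically of low (possibly zero) dimension, and the ``empty interior'' form of the planar result cannot be applied slice by slice. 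To make your route work you would need the measure-zero version of the planar statement together with a genuine coarea/Fubini argument (with regularity of the map assigning to an orbit its plane), none of which is carried out. The paper avoids all of this with a much simpler observation: the correspondence gives an injective continuous map from an open subset of the $(4n-2)$-dimensional symplectic phase space into the $2n$-dimensional phase space of the outer billiard (the exterior of $X$), and since $4n-2>2n$ for $n>1$, such a map cannot exist by invariance of domain. Replacing your slicing sketch by this dimension count closes the gap.
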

\begin{proof}

Since we have a one-to-one correspondence between 3-periodic outer billiard orbits and 3-periodic symplectic billiard orbits, the planar case follows from corollary \ref{3-4-per}.

In higher dimensions ($n >1$) if the set of 3-periodic points of symplectic billiard has nonempty interior then from the one-to-one correspondence described above we have an injective continuous map from some open set of 3-periodic points $U$ (which is $(4n-2)$-dimensional manifold as it is an open set of the phase space for symplectic billiard) to the phase space of outer billiard (exterior of the domain which is $2n$-dimensional manifold). Hence, for $n > 1$ we have an injective continuous map from a manifold of higher dimension to a manifold of lower dimension, which is impossible. Thus, the set of 3-periodic points has an empty interior which implies empty interior for 3-periodic orbits.

\end{proof}

\subsection{4-periodic orbits}

\begin{lemma}
    There is at most one 4-periodic symplectic billiard trajectory through a given boundary point.
\end{lemma}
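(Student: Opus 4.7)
The plan is to show that a 4-periodic symplectic billiard orbit is determined, as an unordered inscribed quadrilateral, by any single one of its vertices; the lemma then follows immediately. The argument will be dimension-free: the standing assumption that $X$ is smooth and strongly convex makes the Gauss map $\partial X\to S^{2n-1}$ a diffeomorphism, and this is the only fact I will use.

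The first step is to unpack the reflection law on a 4-cycle $P_1P_2P_3P_4$. Writing the rule as $P_{i-1}P_{i+1}\parallel JN_{P_i}$ and applying it at $P_1$ and at $P_3$, the diagonal $P_2P_4$ must be parallel both to $JN_{P_1}$ and to $JN_{P_3}$, forcing $N_{P_1}\parallel N_{P_3}$; the same reasoning at $P_2$ and $P_4$ gives $N_{P_2}\parallel N_{P_4}$. Since $P_1\neq P_3$ and the Gauss map is injective, the only possibility is $N_{P_3}=-N_{P_1}$, and similarly $N_{P_4}=-N_{P_2}$.

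The second step is to recover all vertices from $P_1$. The inverse Gauss map applied to $-N_{P_1}$ picks out $P_3$ uniquely. The chord $P_1P_3$ then fixes the common direction of $JN_{P_2}$ and $JN_{P_4}$, hence the line spanned by $N_{P_2}=-N_{P_4}$; inverting the Gauss map on the two unit vectors in that line produces exactly the unordered pair $\{P_2,P_4\}$. Consequently the vertex set $\{P_1,P_2,P_3,P_4\}$ is a function of $P_1$. If two 4-periodic trajectories share a boundary vertex, labelling it $P_1$ in each yields the same four vertices with the same antipodal pairing $\{P_1,P_3\}$, $\{P_2,P_4\}$, hence the same inscribed quadrilateral, proving the lemma in every dimension.

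The only delicate point is the reliance on global injectivity of the Gauss map, which is precisely where strong convexity enters; the argument would fail on a merely convex body. Beyond that, the reasoning is a direct consequence of the reflection law and needs no separate dimension count, in contrast to the 3-periodic case, where a dimension-counting step was required to pass from the plane to higher dimensions. In spirit this mirrors Propositions \ref{3-per-outer} and \ref{4-per-outer}, with the antipodal Gauss-map pairing playing the role that the midpoint and parallelogram constructions played there.
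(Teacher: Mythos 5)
Your proof is correct and follows essentially the same route as the paper's: both read off from the reflection law that the characteristic directions at opposite vertices are parallel (hence the normals antipodal) and that the other pair of vertices is the unique pair of boundary points whose characteristic directions are parallel to the chord, so the whole quadrilateral is determined by one vertex. The only cosmetic difference is that you phrase the uniqueness via the Gauss map, for which injectivity (already guaranteed by strict convexity, as in the paper) suffices rather than the full diffeomorphism coming from strong convexity.
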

\begin{proof}
    We give a general proof which works for any dimension (both planar and higher). Notice that for any 4-periodic trajectory $ABCD$ ($A$, $B$, $C$, $D$ are points on the boundary of the body) one has that the characteristic line through point  $A$, the characteristic line through point $C$ and the segment $BD$ are parallel from the definition of the symplectic billiard map. Similarly, the characteristic line through point $B$, the characteristic line through point $D$ and the segment $AC$ are parallel. From strict convexity of the body for any point $A$ there is exactly one point $C$ with the opposite characteristic direction and there are unique points $B$ and $D$ with characteristic directions parallel to $AC$. Thus there is at most one 4-periodic trajectory through point $A$. 
\end{proof}

\section*{Acknowledgments}
The author thanks Sergei Tabachnikov, Vadim Zharnitsky, Peter Albers, Maciej Wojtkowski and Alexei Glutsyuk for fruitful discussions during the Mathematical billiard program in the Simons Center for Geometry and Physics. The author thanks Piotr Laskawiec for useful comments and the unknown referees for useful remarks. The author is also grateful to the hospitality of the Heidelberg University and the Simons Center.










\begin{thebibliography}{99}

\bibitem{albers-tabach2017}
\newblock  P. Albers and S. Tabachnikov,
\newblock Introducing symplectic billiards,
\newblock \emph{Adv. Math.}, \textbf{333} (2018), 822–867.

\bibitem{blumen-kim-nance-zhar}
\newblock  V. Blumen, K. Kim, J. Nance, and V. Zharnitsky,
\newblock  Three-period orbits in billiards on the surfaces of constant curvature,
\newblock \emph{Int. Math. Res. Not.}, \textbf{2012} (2012), 5014–5024.

\bibitem{callis22}
\newblock K. Callis,
\newblock Absolutely periodic billiard orbits of arbitrarily high order,
\newblock preprint, 2022, arXiv:2209.11721.

\bibitem{fierobe20}
\newblock C. Fierobe,
\newblock On projective billiards with open subsets of triangular orbits,
\newblock preprint, 2020, arXiv:2005.02012.

\bibitem{fierobe-2024}
\newblock  C. Fierobe,
\newblock Examples of projective billiards with open sets of periodic orbits,
\newblock \emph{Discrete Contin. Dyn. Syst.}, \textbf{44(11)} (2024),  3287–3301.

\bibitem{genin-tabach07}
\newblock  D. Genin and S. Tabachnikov,
\newblock On configuration spaces of plane polygons, sub-riemannian geometry and periodic orbits of outer billiards,
\newblock \emph{J. Mod. Dyn.}, \textbf{1} (2007), 155–173.

\bibitem{glutsyuk-2017}
\newblock  A. Glutsyuk,
\newblock On 4-reflective complex analytic billiards,
\newblock \emph{J. Geom. Anal.}, \textbf{27} (2017), 183–238.

\bibitem{glutsyuk-kudryashov12}
\newblock  A. Glutsyuk and Y. Kudryashov,
\newblock No planar billiard possesses an open set of quadrilateral trajectories,
\newblock \emph{J. Mod. Dyn}, \textbf{6} (2012), 287–326.

\bibitem{gutkin_katok95}
\newblock  E. Gutkin and A. Katok,
\newblock Caustics for inner and outer billiards,
\newblock \emph{Comm. Math. Phys.}, \textbf{173} (1995), 101–133.

\bibitem{ivrii80}
\newblock  V. J. Ivrii,
\newblock  The second term of the spectral asymptotics for a laplace-beltrami operator on manifolds with boundary,
\newblock \emph{Funktsional. Anal. i Prilozhen.}, \textbf{14} (1980), 25–34.

\bibitem{moser73}
\newblock  J. Moser,
\newblock Stable and random motions in dynamical systems,
\newblock \emph{Ann. of Math. Stud.}, \textbf{77} (1973).

\bibitem{neumann-1958}
\newblock  B. Neumann,
\newblock Sharing ham and eggs,
\newblock \emph{Iota}, \textbf{1} (1958), 14–18.

\bibitem{rychlik89}
\newblock  M. Rychlik,
\newblock Periodic orbits of the billiard ball map in a convex domain,
\newblock \emph{J. Differential Geom.}, \textbf{30} (1989), 191–205.

\bibitem{stojanov91}
\newblock  L. Stojanov,
\newblock Note on the periodic points of the billiard,
\newblock \emph{J. Differential Geom.}, \textbf{34} (1991), 835–837.

\bibitem{tabach93}
\newblock  S. Tabachnikov,
\newblock Dual billiards,
\newblock \emph{Russian Math. Surveys}, \textbf{48(6)} (1993), 81–109.

\bibitem{tuman20}
\newblock  A. Tumanov,
\newblock Scarcity of periodic orbits in outer billiards,
\newblock \emph{J. Geom. Anal.}, \textbf{30} (2020), 2479–2490.

\bibitem{tuman-zhar06}
\newblock  A. Tumanov and V. Zharnitsky,
\newblock Periodic orbits in outer billiards,
\newblock \emph{Int. Math. Res. Not.}, \textbf{2006} (2006), Art. ID 67089, 17 pp.

\bibitem{vorobets94}
\newblock  Y. Vorobets,
\newblock On the measure of the set of periodic points of a billiard,
\newblock \emph{Math. Notes}, \textbf{55} (1994), 455–460.

\bibitem{wells-1991}
\newblock D. Wells,
\newblock The Penguin Dictionary of Curious and Interesting Geometry,
\newblock Penguin Books, 1991.

\bibitem{weyl1912}
\newblock  H. Weyl,
\newblock Das asymptotische verteilungsgesetz der eigenwerte linearer partieller differentialgleichungen (mit einer anwendung auf die theorie der hohlraumstrahlung),
\newblock \emph{Math. Ann.}, \textbf{71} (1912), 441–479.

\bibitem{wojtkovski94}
\newblock  M. Wojtkovski,
\newblock Two applications of jacobi fields to the billiard ball problem,
\newblock \emph{J. Differential Geom.}, \textbf{40} (1994), 155–164.


\end{thebibliography}
\end{document}